\documentclass[11pt]{article}
\usepackage[T1]{fontenc}
\usepackage{lmodern}
\usepackage{amsmath,amssymb,amsthm,mathtools}
\usepackage[a4paper,margin=28mm]{geometry}
\usepackage{microtype}
\usepackage{enumitem}

\numberwithin{equation}{section}

\newtheorem{theorem}{Theorem}[section]
\newtheorem{lemma}[theorem]{Lemma}

\newtheorem{corollary}[theorem]{Corollary}

\theoremstyle{definition}

\theoremstyle{remark}

\DeclareMathOperator{\tr}{tr}
\DeclareMathOperator{\defect}{def}

\newcommand{\one}{\mathbf 1}
\newcommand{\cD}{\mathcal D}
\newcommand{\cB}{\mathcal B}

\newcommand{\normF}[1]{\left\lVert #1\right\rVert_F}
\newcommand{\ip}[2]{\left\langle #1,#2\right\rangle}

\title{Positive Square Energy of Graphs with\\ Minimum Degree at Least Two}
\author{S. Akbari$^a$\thanks{E-mail address: s\_akbari@sharif.edu}, Fu-Tao Hu$^b$\thanks{E-mail address: hufu@ahu.edu.cn}, Ya-Yang Liu$^b$\thanks{E-mail address: yyl\_math@163.com}\\
{\small $^a$Department of Mathematics, Sharif University of Technology, Tehran, Iran}\\
{\small $^b$Center for Pure Mathematics, School of Mathematical Sciences, Anhui University, Hefei, P.R. China}
}
\date{}

\begin{document}
\maketitle
\vspace{-1.5em}

\begin{abstract}
Let $s^+(G)$ denote the sum of the squares of the positive adjacency
eigenvalues of a graph $G$. 
The square-energy conjecture of Elphick, Farber, Goldberg, and Wocjan,
proved by Liu, Tang, and Zhang, gives a lower bound of $n-1$ for any connected graph of order $n$.
We strengthen this bound to $s^+(G)\ge n$ for every
connected graph $G$ of order $n$ with minimum degree at
least two,  unless $G$ is a cycle.
\end{abstract}

\noindent\textbf{Keywords:} positive square energy; adjacency eigenvalues;
minimum degree; block decomposition; doubly nonnegative matrices.
\par\smallskip
\noindent\textbf{2020 Mathematics Subject Classification:} 05C50; 15A18, 15A42.

\section{Introduction}
All graphs considered here are finite, simple, and undirected.
For a graph $G$ of order $n$, let $A(G)$ be its adjacency matrix and let
$\lambda_1,\ldots,\lambda_n$ be its adjacency eigenvalues, counted with
multiplicity. The positive and negative square energies are
\[
 s^+(G)=\sum_{\lambda_i>0}\lambda_i^2,
 \qquad
 s^-(G)=\sum_{\lambda_i<0}\lambda_i^2.
\]
They satisfy $s^+(G)+s^-(G)=2|E(G)|$.
Elphick, Farber, Goldberg, and Wocjan~\cite{EFGW} conjectured that both
square energies of a connected graph of order $n$ are at least $n-1$.
Liu, Tang, and Zhang~\cite{LTZ} proved this bound by establishing an
inequality for doubly nonnegative matrices. The bound is attained by
every tree, so a stronger conclusion requires additional structure.

Refinements of the positive square-energy bound were investigated by
Akbari, Kumar, Mohar, Pragada, and Zhang~\cite{AKMPZ}.
Akbari, Hu, and Liu~\cite{AHL} proved that every $2$-connected graph $G$
which is not a cycle satisfies $s^+(G)\ge |V(G)|$.
The cycle formulas recalled below show that this stronger inequality
fails for $C_{4k+1}$, $k\ge1$.
Our main theorem extends the positive square-energy result of~\cite{AHL}
to all connected graphs of minimum degree at least two, allowing cut
vertices and bridges. In this class, cycles are the only graphs that
must be excluded.

\begin{theorem}\label{thm:main}
Let $G$ be a connected graph of order $n$ with $\delta(G)\ge2$.
If $G$ is not a cycle, then
\(
 s^+(G)\ge n.
\)
\end{theorem}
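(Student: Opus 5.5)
We will argue through the doubly nonnegative framework of Liu, Tang, and Zhang~\cite{LTZ}. Write $A=A(G)=P-N$, where $P$ and $N$ are the positive and negative parts of $A$ in its spectral decomposition. Then $P,N\succeq 0$, $PN=0$, and $s^+(G)=\normF{P}^2=\tr P^2$. The starting point is the form of the LTZ inequality that gives $s^+(H)\ge |V(H)|-1$ for connected $H$: choose a spanning tree and test $P$ against a suitable doubly nonnegative certificate. Our aim is to gain one extra unit from the hypothesis $\delta(G)\ge2$. The plan has three steps: a superadditivity lemma over a block (or edge-cut) decomposition, a base case for $2$-connected blocks taken from~\cite{AHL}, and a separate treatment of blocks that are cycles.

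\emph{Step 1 (gluing).} We first prove a lemma of the following form. If $G$ is obtained from connected graphs $G_1,G_2$ by identifying a vertex, or by joining them with a bridge, then $s^+(G)\ge s^+(G_1)+s^+(G_2)-c$. Here $c=1$ for a vertex identification and $c=0$ for a bridge. To obtain such a bound, compress $P(G)$ onto the coordinates of each piece. Let $Q_i$ be the principal submatrix of $P(G)$ indexed by $V(G_i)$. Then $Q_i-A(G_i)$ restricted to that piece is positive semidefinite up to a correction supported at the cut vertex. By the variational characterization $s^+(H)=\min\{\normF{X}^2: X\succeq0,\ X\succeq A(H)\}$ (which holds because $P$ is the projection of $A$ onto the PSD cone), each $\normF{Q_i}^2$ is at least $s^+(G_i)$ minus the cost of the correction. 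The cross terms between the pieces only add to $\normF{P(G)}^2$. For a bridge we instead delete the bridge edge and use monotonicity of $s^+$ under adding edges between components, again via the variational formula.

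\emph{Step 2 (block tree).} Contract the block--bridge structure of $G$ to a tree whose leaves are end blocks. Since $\delta\ge2$, every leaf block is $2$-connected: it is either a cycle or a $2$-connected non-cycle. A non-cycle $2$-connected block $B$ satisfies $s^+(B)\ge|V(B)|$ by~\cite{AHL}. A cycle block satisfies $s^+(C_m)\ge m-1$, with the exact values available from the cycle formulas. Summing via Step 1 loses one unit per cut-vertex identification, which the vertex counts exactly compensate, since $|V(G)|=\sum|V(B_i)|-(\#\text{identifications})$. This yields $s^+(G)\ge n-(\#\text{cycle blocks with deficit})+(\text{surplus})$, and we must show the surplus covers the deficits.

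\emph{Step 3 (main obstacle).} The difficulty is the case where $G$ is built from cycles, especially copies of $C_{4k+1}$, which have deficit: for example, two cycles sharing a vertex, or two cycles joined by a path of bridges. Bridges themselves (as $K_2$ blocks) contribute $s^+=1$ with two vertices, which is again a deficit of one. Here naive gluing is too lossy. We plan to refine Step 1 so that the loss at a cut vertex is $1-\varepsilon$, with $\varepsilon$ depending on the Perron-type mass of $P(B)$ at that vertex. For cycles this is explicit: every vertex carries diagonal entry $s^+(C_m)/m$ by vertex-transitivity. We therefore need a quantitative gain at each attachment, showing that gluing two pieces strictly increases $\tr P^2$ by at least the missing amount. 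We would first try a direct computation for the extremal configurations, namely two $C_5$'s glued at a vertex and dumbbells $C_{m}$--path--$C_{m'}$, using an explicit test matrix $X\succeq A$ built from the known $P(C_m)$. We would then argue by induction on the number of blocks that any additional block only increases the surplus. Establishing this strict gain uniformly is where we expect most of the work.
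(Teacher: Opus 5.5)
There is a genuine gap. The quantitative gain you defer to Step~3 is the entire content of the theorem, and Steps~1--2 cannot supply it.

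First, Step~1 is false as stated at a cut vertex; your bridge case ($c=0$) is fine, since it is just the partition inequality. Let $G$ be two copies of $K_6$ sharing one vertex, so $\delta(G)\ge2$ and $n=11$. Its spectrum is $-1$ (eight times), $4$, and $2\pm\sqrt{14}$. Hence $s^+(G)=34+4\sqrt{14}\approx48.97<49=s^+(K_6)+s^+(K_6)-1$, and the same failure occurs for two copies of $K_t$ sharing a vertex for every $t\ge6$. So $s^+-|V|$ is not superadditive under vertex identification, and the bookkeeping of Step~2 breaks.

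Your restriction argument, done carefully, proves something different. $Q_i-A(G_i)$ is a principal submatrix of $N\succeq0$, so no correction is needed and $\normF{Q_i}^2\ge s^+(G_i)$. The only loss is that the $(v,v)$ entry is counted twice, whence $s^+(G)\ge s^+(G_1)+s^+(G_2)-P(G)_{vv}^2$ with $P(G)_{vv}\le\frac12\sqrt{d_G(v)}$. This loss is at most one only when $d_G(v)\le4$.

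Second, and more seriously, any scheme that discards the cross terms between pieces falls short on exactly the graphs that matter. For two copies of $C_5$ joined by a bridge, superadditivity gives only $2s^+(C_5)=14-2\sqrt5\approx9.53<10$. For two copies of $C_5$ sharing a vertex, even your $c=1$ gives only $13-2\sqrt5<9$. These families are infinite (cycles $C_{4k+1}$ of any length, connecting paths of any length, any number of blocks). Direct computation on extremal configurations therefore cannot replace a uniform estimate, and your claim that adding a block never decreases the surplus is only announced.

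The paper closes this gap with two ingredients your proposal lacks.

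\emph{A coalescence inequality.} Lemma~\ref{lem:coalescence} comes from the max formula \eqref{eq:variational-max}, not from restricting $P(G)$. The test matrix is assembled from $A(H_1)_+$ and $A(H_2)_+$. The two cut-vertex diagonals $p,q$ are replaced by a single $z\in[\max\{p,q\},p+q]$, and positive semidefiniteness is restored by a rank-one coupling $\gamma rt^{\mathsf T}$ across the cut vertex. The loss is then at most $\frac29\sqrt{\mu_{v_1}(H_1)\mu_{v_2}(H_2)}$, and it is nonpositive in some regimes. Combined with $\mu_v\le\rho+1$, this yields $\rho(H)\ge\frac89(\rho(H_1)+\rho(H_2))-\frac29$. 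This inequality is fed by sharper surplus inputs:
\begin{itemize}
\item $\rho(B)>10/9$ for $2$-connected non-cycles, from $\mu_v>19/9$ at a maximum-degree vertex; this is stronger than the bound of \cite{AHL};
\item $\rho\ge3-\sqrt5$ for unicyclic graphs, via the Coulson integral;
\item propagation of these bounds through pendant paths and complete blocks.
\end{itemize}

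\emph{A blockwise budget that avoids accumulating gluing losses.} When there are at least three leaf blocks or three noncomplete blocks, the paper applies the Liu--Tang--Zhang doubly nonnegative inequality to the folded matrix built from $A_-\circ A_-$, split over blocks. This gives $\rho(G)\ge\sum_B\varepsilon_B$ with $\varepsilon_B\ge\frac13$ for every noncomplete block (the block-gap theorem $\kappa(B)\le q(B)-\frac13$, proved with correlation certificates) and for every leaf triangle. Leaf blocks $K_t$ with $t\ge4$, and triangle end blocks, are dispatched directly by the partition inequality. Gluing is therefore needed only for a chain whose only noncomplete blocks are its two end blocks. There the internal complete blocks, with $\rho(K_t)=(t-1)(t-2)\ge2$, more than pay for their own gluing losses.
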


A {\it block} is a maximal $2$-connected subgraph, or a bridge together with
its endpoints. We use $2$-connected only for graphs of order at least
three. A \emph{block graph} is a connected graph all of whose blocks are
complete; a one-vertex graph is also regarded as a block graph.
A \emph{leaf block} is a block containing exactly one cut vertex of the
ambient graph. Its other vertices are called its private vertices.
We denote the set of blocks of $H$ by $\cB(H)$.
For a connected graph $H$, define the {\it surplus}
\begin{equation}\label{eq:rho}
 \rho(H)=s^+(H)-|V(H)|+1.
\end{equation}
Thus Theorem~\ref{thm:main} is equivalent to $\rho(G)\ge1$.

\section{Spectral preliminaries}\label{sec:prelim}
For a real symmetric matrix $A$, write
\[
 A=A_+-A_-,\qquad A_+,A_-\succeq0,\qquad A_+A_-=0.
\]
For an adjacency matrix these are its positive and negative spectral
parts, and $s^\pm(G)=\normF{A_\pm}^2$.
We also write $|A|=A_++A_-=(A^2)^{1/2}$.
A matrix $M$ is \emph{doubly nonnegative} if it is positive semidefinite and
entrywise nonnegative. For a connected graph $H$, put
\begin{equation}\label{eq:qST}
 q(H)=2|E(H)|-|V(H)|+1,\qquad
 S_H(M)=\sum_{uv\in E(H)}\sqrt{M_{uv}},\qquad
 T(M)=\one^{\mathsf T}M\one.
\end{equation}
Every edge in a sum is counted once.
We use the following result of Liu, Tang, and
Zhang~\cite[Theorems~1.2 and~2.1]{LTZ}.
\begin{theorem}[Liu--Tang--Zhang]\label{thm:LTZ}
If $H$ is connected and $M$ is a doubly nonnegative matrix indexed by
$V(H)$, then
\begin{equation}\label{eq:LTZmatrix}
 4S_H(M)^2\le q(H)T(M).
\end{equation}
Moreover,
\begin{equation}\label{eq:connected-bound}
 s^+(H),s^-(H)\ge |V(H)|-1.
\end{equation}
\end{theorem}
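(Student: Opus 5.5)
Since this theorem is quoted from~\cite{LTZ}, the paper uses it as a black box. If I had to reprove it, I would do it in two stages. First the matrix inequality~\eqref{eq:LTZmatrix}, then the square-energy bound~\eqref{eq:connected-bound} derived from it by feeding in suitable doubly nonnegative matrices built from $A_\pm$. The second stage is where I am least confident: the crude reduction below does not close, and I have not found the right choice of $M$.

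\emph{Step 1 (matrix inequality).} I would normalise so that $T(M)=1$ and maximise $S_H(M)$ over the convex set of doubly nonnegative $M$. The function $M\mapsto\sum_{uv\in E}\sqrt{M_{uv}}$ is concave, so the maximum is attained. The key local input is positive semidefiniteness on $2\times2$ principal minors, $M_{uv}\le\sqrt{M_{uu}M_{vv}}$. Together with nonnegativity this gives $T(M)\ge\sum_v M_{vv}+2\sum_{uv\in E}M_{uv}$, since the discarded off-edge terms are nonnegative. Cauchy--Schwarz then gives $S_H(M)^2\le |E|\sum_{uv\in E}M_{uv}$. This only yields $4S^2\le 2|E|\,T$. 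To reach $q(H)=2|E|-|V|+1$, one has to use that positive semidefiniteness also forces $\one^{\mathsf T}M\one$ to absorb the diagonal mass in a way that saves one unit per vertex except one. I would get this from a spanning tree $F$ of $H$: bound $\sum_{uv\in F}\sqrt{M_{uv}}$ against the diagonal, using the Gram representation $M_{uv}=\ip{x_u}{x_v}$ with $x_u$ entrywise nonnegative. The remaining $|E|-|V|+1$ edges are then handled by Cauchy--Schwarz. I expect this accounting, which shows exactly where $q(H)$ comes from, to be the main obstacle of the whole proof.

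\emph{Step 2 (from matrices to eigenvalues).} Write $x_{uv}=(A_+)_{uv}$ and $y_{uv}=(A_-)_{uv}$, so that $x_{uv}-y_{uv}=1$ on edges. The Schur products $A_+\circ A_+$ and $A_-\circ A_-$ are doubly nonnegative, with $T=s^+$ and $T=s^-$ respectively, and with $S_H=\sum_{uv\in E}|x_{uv}|$ and $S_H=\sum_{uv\in E}|y_{uv}|$. Applying~\eqref{eq:LTZmatrix} to both and adding gives
\[
 \sqrt{q(H)}\bigl(\sqrt{s^+}+\sqrt{s^-}\bigr)\ge 2\sum_{uv\in E}|x_{uv}-y_{uv}|=2|E|.
\]
Combined with $s^++s^-=2|E|$, this is not sufficient in general; it closes only when $q(H)\le |V|-1$. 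So I would refine it by applying~\eqref{eq:LTZmatrix} to a single combination $M=\alpha\,A_+\circ A_++\beta\,A_-\circ A_-$ and optimising over $\alpha,\beta\ge0$. I would also use the additional orthogonality $\ip{A_+}{A_-}=0$, which links the diagonal and off-diagonal entries. The goal of that optimisation is to show that $s^+<|V|-1$ would force $s^->q(H)$ in a way that contradicts the inequality.
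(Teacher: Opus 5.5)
Both stages have genuine gaps. The paper does not reprove this theorem; it imports it from Liu--Tang--Zhang. The mechanisms you are missing do, however, appear elsewhere in the paper.

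\textbf{Stage 2.} You already have the right matrices, but you use too weak a linear identity. The relation $x_{uv}-y_{uv}=1$ on edges is only the sum of two separate identities, and adding them discards exactly the information needed. Since $A_+A_-=0$, we have $\tr(AA_-)=-\normF{A_-}^2=-s^-$ and $\tr(AA_+)=s^+$. On the other hand, $\tr(AA_\pm)=2\sum_{uv\in E}(A_\pm)_{uv}$. Thus
\[
 -2\sum_{uv\in E}y_{uv}=s^-,\qquad 2\sum_{uv\in E}x_{uv}=s^+ .
\]
Hence $2S_H(A_-\circ A_-)\ge s^-=T(A_-\circ A_-)$. Applying \eqref{eq:LTZmatrix} to $A_-\circ A_-$ alone gives $(s^-)^2\le q(H)\,s^-$, i.e.\ $s^-\le 2|E|-|V|+1$, which is $s^+\ge |V|-1$. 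The same argument with $A_+\circ A_+$ gives $s^-\ge|V|-1$. No combination $\alpha A_+\circ A_++\beta A_-\circ A_-$ is needed. This is exactly the step the paper uses in Lemma~\ref{lem:spectral-budget}.

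\textbf{Stage 1.} This is not yet an argument. The improvement from $2|E|$ to $q(H)$ is the entire content of \eqref{eq:LTZmatrix}, and the tool you propose for it is unavailable. A factorisation $M_{uv}=\ip{x_u}{x_v}$ with entrywise nonnegative $x_u$ is the definition of complete positivity. From order five on, there are doubly nonnegative matrices with no such factorisation, and $A_\pm\circ A_\pm$ are only known to be doubly nonnegative.

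A route that works is to keep the diagonal term you already wrote down. This gives the flat envelope $4S^2/T\le 2m(1-x)$, where $S=S_H(M)$, $T=T(M)$, $m=|E|$ and $x=\tr(M)/T$. Pair it with a second envelope that is good when $x$ is small, as in the proof of Theorem~\ref{thm:block-gap}, and induct on the order $r$.
\begin{itemize}
\item If $H$ has a cut vertex, fold $M$ and split it over the blocks by Lemma~\ref{lem:matrix-splitting}. Cauchy--Schwarz together with $q(H)=\sum_B q(B)$ then finishes.
\item If $H$ is $2$-connected with $r\ge3$, apply the induction hypothesis to every connected $H-w$ and sum over $w$. This gives $4S^2/T\le (q-1)\bigl(1+x/(r-2)\bigr)$ with $q=q(H)$.
\item The first envelope decreases in $x$ and equals $q$ at $x=(r-1)/(2m)$. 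The second increases in $x$, and at that point it is at most $q$ exactly because $2m\le r(r-1)$. So the minimum of the two envelopes never exceeds $q$.
\item The base case $K_2$ is just $2M_{uv}\le M_{uu}+M_{vv}$.
\end{itemize}
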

In particular, $\rho(H)\ge0$ for every connected graph $H$.

We shall use the following standard Schur complement criterion.

\begin{lemma}[Schur complement criterion {\cite[Theorem~7.7.6]{HJ}}]\label{lem:HJ}
Let
\[
M=
\begin{pmatrix}
P&R\\
R^{\mathsf T}&Q
\end{pmatrix}
\]
be a real symmetric block matrix, where $P$ is positive definite. Then
\[
M\succeq0
\quad\Longleftrightarrow\quad
Q-R^{\mathsf T}P^{-1}R\succeq0.
\]
The matrix $Q-R^{\mathsf T}P^{-1}R$ is called the Schur complement of $P$ in $M$.
\end{lemma}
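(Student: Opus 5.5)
The plan is to diagonalize $M$ by a block congruence and then use that congruence by an invertible matrix preserves positive semidefiniteness. Throughout, write $S=Q-R^{\mathsf T}P^{-1}R$. Since $P$ is positive definite, it is invertible, so $P^{-1}R$ is well defined. Define the block upper unitriangular matrix
\[
L=\begin{pmatrix} I & P^{-1}R\\ 0 & I\end{pmatrix},
\]
which is invertible, with inverse obtained by negating the off-diagonal block.

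The first step is to verify the factorization
\[
M=L^{\mathsf T}\begin{pmatrix} P & 0\\ 0 & S\end{pmatrix}L .
\]
This is a direct block multiplication that uses only the symmetry of $P$, hence of $P^{-1}$:
\begin{itemize}
\item the $(1,1)$ block is $P$;
\item the $(1,2)$ block is $P\,P^{-1}R=R$;
\item the $(2,1)$ block is $R^{\mathsf T}P^{-1}P=R^{\mathsf T}$;
\item the $(2,2)$ block is $R^{\mathsf T}P^{-1}PP^{-1}R+S=Q$.
\end{itemize}

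The second step is the congruence principle. If $L$ is invertible and $N$ is symmetric, then $L^{\mathsf T}NL\succeq0$ if and only if $N\succeq0$. Indeed, $x^{\mathsf T}L^{\mathsf T}NLx=y^{\mathsf T}Ny$ with $y=Lx$, and $y$ ranges over all vectors as $x$ does. Hence $M\succeq0$ if and only if $\operatorname{diag}(P,S)\succeq0$.

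The last step is to note that a block diagonal symmetric matrix is positive semidefinite exactly when each diagonal block is. This follows by testing the quadratic form on vectors $(u,v)$ and taking $u=0$ or $v=0$. Since $P\succ0$ holds by hypothesis, $\operatorname{diag}(P,S)\succeq0$ if and only if $S\succeq0$, which gives the claimed equivalence.

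There is no real obstacle here. The only point that needs care is that invertibility of $P$ (guaranteed by positive definiteness) is what makes $L$ well defined and invertible, and so what makes the congruence step an equivalence rather than a one-way implication.
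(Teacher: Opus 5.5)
Your proof is correct: the factorization $M=L^{\mathsf T}\operatorname{diag}(P,\,Q-R^{\mathsf T}P^{-1}R)\,L$ checks out block by block, and invertibility of $L$ makes the congruence step a genuine equivalence. The paper does not prove this lemma itself but cites Horn--Johnson, and the standard argument there is exactly this block-triangular congruence, so your route is essentially the intended one.
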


The variational identities below are equivalent formulations of
Higham's nearest positive semidefinite matrix theorem for the Frobenius
norm~\cite{Higham}. The partition inequality is the $p=2$ case of the
super-additivity theorem of Akbari, Kumar, Mohar, and
Pragada~\cite[Theorem~3]{AKMP}.
\begin{lemma}\label{lem:variational}
For every real symmetric matrix $A$,
\begin{align}
 \normF{A_+}^2
 &=\max_{X\succeq0}\bigl(2\tr(AX)-\normF{X}^2\bigr),
 \label{eq:variational-max}\\
 \normF{A_+}^2
 &=\min_{Y\succeq0}\normF{A+Y}^2.
 \label{eq:variational-min}
\end{align}
Consequently, if $V(G)=U\sqcup W$, then
\begin{equation}\label{eq:partition}
 s^+(G)\ge s^+(G[U])+s^+(G[W]).
\end{equation}
\end{lemma}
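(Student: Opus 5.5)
The plan is to verify both identities directly from the spectral decomposition $A=A_+-A_-$, using only two facts: $A_+A_-=0$, and $\tr(PQ)\ge0$ whenever $P,Q\succeq0$. The latter holds because $\tr(PQ)=\normF{P^{1/2}Q^{1/2}}^2$. The partition inequality will then come from plugging a block-diagonal test matrix into \eqref{eq:variational-max}.

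\emph{Identity \eqref{eq:variational-min}.} For $Y\succeq0$, write $A+Y=A_++(Y-A_-)$ and expand:
\[
\normF{A+Y}^2=\normF{A_+}^2+\normF{Y-A_-}^2+2\tr\bigl(A_+(Y-A_-)\bigr).
\]
Since $A_+A_-=0$, the cross term equals $2\tr(A_+Y)$. This is nonnegative because both $A_+$ and $Y$ are positive semidefinite. Hence $\normF{A+Y}^2\ge\normF{A_+}^2$, with equality at $Y=A_-$, which proves \eqref{eq:variational-min}.

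\emph{Identity \eqref{eq:variational-max}.} Expanding the square gives
\[
2\tr(AX)-\normF{X}^2=\normF{A}^2-\normF{A-X}^2 .
\]
So it suffices to show that $\min_{X\succeq0}\normF{A-X}^2=\normF{A_-}^2$; this is Higham's statement. Write $A-X=(A_+-X)-A_-$ and expand:
\[
\normF{A-X}^2=\normF{A_+-X}^2+\normF{A_-}^2+2\tr(XA_-).
\]
Again $A_+A_-=0$ was used to drop $\tr(A_+A_-)$. The last term is nonnegative, so $\normF{A-X}^2\ge\normF{A_-}^2$, with equality at $X=A_+$. Finally, $A_+A_-=0$ also gives $\normF{A}^2=\normF{A_+}^2+\normF{A_-}^2$, and \eqref{eq:variational-max} follows.

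\emph{Partition inequality \eqref{eq:partition}.} Order the vertices so that
\[
A=A(G)=\begin{pmatrix}A_U&R\\ R^{\mathsf T}&A_W\end{pmatrix},
\qquad A_U=A(G[U]),\ A_W=A(G[W]).
\]
Take the test matrix $X=(A_U)_+\oplus(A_W)_+$, which is positive semidefinite. Because $X$ is block diagonal, the off-diagonal block $R$ does not contribute to $\tr(AX)$, and $\normF{X}^2$ splits across the blocks. Therefore
\[
2\tr(AX)-\normF{X}^2=\bigl(2\tr(A_U(A_U)_+)-\normF{(A_U)_+}^2\bigr)+\bigl(2\tr(A_W(A_W)_+)-\normF{(A_W)_+}^2\bigr).
\]
Each bracket equals $\normF{(A_U)_+}^2$, respectively $\normF{(A_W)_+}^2$, since $\tr(B\,B_+)=\normF{B_+}^2$ for any symmetric $B$. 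Applying \eqref{eq:variational-max} to $A$ gives $s^+(G)\ge s^+(G[U])+s^+(G[W])$.

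I do not expect any real obstacle. The only point needing care is the sign of the cross terms, which rests on $\tr(PQ)\ge0$ for positive semidefinite $P,Q$ and on $A_+A_-=0$.
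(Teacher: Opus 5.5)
Your proof is correct. The expansions around $A_+$ and $A_-$, together with $A_+A_-=0$ and $\tr(PQ)=\normF{P^{1/2}Q^{1/2}}^2\ge0$, give both identities, with the extrema attained at $X=A_+$ and $Y=A_-$. The block-diagonal test matrix $(A_U)_+\oplus(A_W)_+$ then yields \eqref{eq:partition}, since $\tr(B\,B_+)=\normF{B_+}^2$.

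The paper does not prove the lemma. It identifies \eqref{eq:variational-max} and \eqref{eq:variational-min} with Higham's nearest positive semidefinite matrix theorem. It quotes \eqref{eq:partition} as the $p=2$ case of the super-additivity theorem of Akbari, Kumar, Mohar, and Pragada.

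Your route makes the lemma self-contained. It also shows that \eqref{eq:partition} really is a one-line consequence of \eqref{eq:variational-max}, as the word ``Consequently'' in the statement suggests. This is the same test-matrix substitution the paper uses later in Lemma~\ref{lem:coalescence}.

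The citations buy generality that the paper never uses. The super-additivity theorem also covers exponents $p\ne2$, which your quadratic expansion does not reach. Only $p=2$ is needed here.
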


For a vertex $v$ of a graph $H$, set
\begin{equation}\label{eq:mu}
 \mu_v(H)=2\bigl(A^2(H)_+\bigr)_{vv}
                -\bigl(A(H)_+\bigr)_{vv}^{\,2}.
\end{equation}
This is the contribution of the row and column indexed by $v$ to
$\normF{A(H)_+}^2$, with the diagonal entry counted once.

The next deletion estimate is a principal-submatrix consequence of
Higham's projection theorem~\cite{Higham}; see also
Zhang~\cite[Section~4, proof of Theorem~1.10]{Zhang} for the corresponding
row-and-column estimate. The connected-case bound additionally
uses~\eqref{eq:connected-bound}. We record both inequalities in the
notation~\eqref{eq:mu}.
\begin{lemma}\label{lem:deletion}
For every graph $H$ and $v\in V(H)$,
\begin{equation}\label{eq:deletion}
 s^+(H)\ge s^+(H-v)+\mu_v(H).
\end{equation}
If $H$ has order at least two and $H-v$ is connected, then
\begin{equation}\label{eq:mu-upper}
 \mu_v(H)\le\rho(H)+1.
\end{equation}
\end{lemma}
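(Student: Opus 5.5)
The plan is to derive~\eqref{eq:deletion} directly from the minimization form~\eqref{eq:variational-min} of Lemma~\ref{lem:variational}. Then~\eqref{eq:mu-upper} follows by combining it with~\eqref{eq:connected-bound} applied to $H-v$.

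For~\eqref{eq:deletion}, write $A=A(H)=A_+-A_-$. Let $B=A(H-v)$, which is the principal submatrix of $A$ on $V(H)\setminus\{v\}$. For any symmetric $Z$, let $Z'$ denote its principal submatrix on $V(H)\setminus\{v\}$.

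\begin{enumerate}[label=(\roman*)]
\item Since $A+A_-=A_+$, taking principal submatrices gives $B+(A_-)'=(A_+)'$.
\item A principal submatrix of a positive semidefinite matrix is positive semidefinite, so $Y=(A_-)'\succeq0$ is an admissible choice in~\eqref{eq:variational-min} for $B$. This gives
\[
 s^+(H-v)=\min_{Y\succeq0}\normF{B+Y}^2\le\normF{(A_+)'}^2 .
\]
\item Deleting row and column $v$ from $A_+$ removes exactly the entries $(A_+)_{vu}$ and $(A_+)_{uv}$ for $u\in V(H)$, with the diagonal entry counted once. Their squared sum is
\[
 2\sum_u (A_+)_{vu}^2-(A_+)_{vv}^2=2\bigl(A_+^2\bigr)_{vv}-(A_+)_{vv}^2=\mu_v(H).
\]
Here I read $A^2(H)_+$ in~\eqref{eq:mu} as $(A(H)_+)^2$. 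The identity $(A_+^2)_{vv}=\sum_u (A_+)_{vu}^2$ uses the symmetry of $A_+$.
\item Hence $\normF{(A_+)'}^2=\normF{A_+}^2-\mu_v(H)=s^+(H)-\mu_v(H)$. Combining with step (ii) gives~\eqref{eq:deletion}.
\end{enumerate}

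For~\eqref{eq:mu-upper}, assume $|V(H)|\ge2$ and that $H-v$ is connected. Applying~\eqref{eq:connected-bound} to $H-v$, which is connected of order $|V(H)|-1$, gives $s^+(H-v)\ge|V(H)|-2$. If $H-v$ is a single vertex, this just reads $s^+(H-v)\ge 0$, which is trivially true. Substituting into~\eqref{eq:deletion} yields
\[
 \mu_v(H)\le s^+(H)-|V(H)|+2=\rho(H)+1 .
\]

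There is no real obstacle here. The only point needing care is the bookkeeping in step (iii): the removed row and column share the diagonal entry, which must be counted once so that the total matches $\mu_v(H)$ exactly.
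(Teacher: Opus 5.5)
Your proof is correct and follows the route the paper indicates: you restrict the minimizer $A_-$ in the variational (Higham) formula \eqref{eq:variational-min} to the principal submatrix on $V(H)\setminus\{v\}$ to get \eqref{eq:deletion}, and then apply \eqref{eq:connected-bound} to $H-v$ to get \eqref{eq:mu-upper}. Your reading of $A^2(H)_+$ as $(A(H)_+)^2$ is also the one consistent with the paper's later use, namely $\mu_v=2c-a^2$ in Lemma~\ref{lem:max-degree}.
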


\begin{lemma}\label{lem:coalescence}
Let $H_1$ and $H_2$ be disjoint connected graphs, each of order at least two,
and let $H$ be obtained by identifying $v_1\in V(H_1)$ with
$v_2\in V(H_2)$. Then
\begin{equation}\label{eq:coalescence}
 \rho(H)\ge\rho(H_1)+\rho(H_2)
 -\frac29\sqrt{\mu_{v_1}(H_1)\mu_{v_2}(H_2)}.
\end{equation}
If both $H_i-v_i$ are connected, then
\begin{equation}\label{eq:coalescence-linear}
 \rho(H)\ge\frac89\bigl(\rho(H_1)+\rho(H_2)\bigr)-\frac29.
\end{equation}
\end{lemma}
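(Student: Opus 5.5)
The plan is to reduce \eqref{eq:coalescence} to a lower bound on $s^+(H)$ and prove it with the variational formula \eqref{eq:variational-max} of Lemma~\ref{lem:variational}, using a trial matrix glued from the positive parts of $A(H_1)$ and $A(H_2)$. Write $v$ for the identified vertex, $n_i=|V(H_i)|$, $A_i=A(H_i)$, $X_i=(A_i)_+$, $s_i=s^+(H_i)=\normF{X_i}^2=\tr(A_iX_i)$, and $a_i=(X_i)_{v_iv_i}$. Since $|V(H)|=n_1+n_2-1$, the definition \eqref{eq:rho} gives
\[
 \rho(H)-\rho(H_1)-\rho(H_2)=s^+(H)-s_1-s_2 .
\]
So \eqref{eq:coalescence} is equivalent to
\[
 s^+(H)\ge s_1+s_2-\tfrac29\sqrt{\mu_{v_1}(H_1)\mu_{v_2}(H_2)}.
\]

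\emph{First attempt.} Embed $X_1,X_2$ into $V(H)$ so that they overlap only in the entry $(v,v)$, and take $X=X_1+X_2$. This $X$ is positive semidefinite. Because $A(H)=A_1+A_2$ with disjoint edge sets and zero diagonal, we get $\tr(A(H)X)=s_1+s_2$. The only cross term in $\normF{X}^2$ is $2a_1a_2$. Hence
\[
 s^+(H)\ge s_1+s_2-2a_1a_2 .
\]
Since $(X_i^2)_{vv}\ge (X_i)_{vv}^2$, we have $\mu_{v_i}(H_i)\ge a_i^2$, so $a_1a_2\le\sqrt{\mu_1\mu_2}$, where $\mu_i:=\mu_{v_i}(H_i)$. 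This proves the correct shape of the bound, but with constant $2$ instead of $\tfrac29$.

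\emph{Refinement (main obstacle).} To shrink the constant, I would take a one-parameter family of trial matrices in each part. Let $x_i=X_ie_{v}$. By Lemma~\ref{lem:HJ}, the matrix $X_i-x_ix_i^{\mathsf T}/a_i$ is positive semidefinite, so for $\theta_i\in[0,1]$ and a scalar $t_i$ the family
\[
 X_i(\theta_i,t_i)=X_i-\theta_i\,x_ix_i^{\mathsf T}/a_i+t_i\,(\text{correction supported on row/column } v)
\]
stays PSD, and its diagonal entry at $v$ is reduced to $(1-\theta_i)a_i$. The key computations are $A_ix_i=X_i^2e_v$ and $e_v^{\mathsf T}X_i^2e_v=(\mu_i+a_i^2)/2$. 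With these, each part loses only an amount quadratic in $\theta_i$ expressible through $\mu_i$ and $a_i$, while the overlap penalty becomes $2(1-\theta_1)(1-\theta_2)a_1a_2$. I would then optimise the resulting expression over $\theta_1,\theta_2$, possibly also rescaling the row/column $v$. Finally I would use $a_i^2\le\mu_i$ and Cauchy--Schwarz to bound the total loss by $\tfrac29\sqrt{\mu_1\mu_2}$; I expect the worst case to be balanced, giving a factor like $2\cdot\tfrac19$. Getting exactly $\tfrac29$ is the step I expect to need the most care. If this row/column family is not sharp enough, my fallback is the min formula \eqref{eq:variational-min} applied dually, or the principal-submatrix estimate behind Lemma~\ref{lem:deletion}.

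\emph{Linear form.} For \eqref{eq:coalescence-linear}, suppose both $H_i-v_i$ are connected. Then \eqref{eq:mu-upper} gives $\mu_i\le\rho(H_i)+1$. By AM--GM,
\[
 \sqrt{\mu_1\mu_2}\le\tfrac12(\mu_1+\mu_2)\le\tfrac12\bigl(\rho(H_1)+\rho(H_2)+2\bigr).
\]
Substituting into \eqref{eq:coalescence} yields
\[
 \rho(H)\ge\rho(H_1)+\rho(H_2)-\tfrac19\bigl(\rho(H_1)+\rho(H_2)+2\bigr)=\tfrac89\bigl(\rho(H_1)+\rho(H_2)\bigr)-\tfrac29,
\]
which is \eqref{eq:coalescence-linear}.
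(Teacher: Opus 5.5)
Your reduction of \eqref{eq:coalescence} to $s^+(H)\ge s_1+s_2-\frac29\sqrt{\mu_1\mu_2}$ is correct. So is your first-attempt bound with constant $2$, and your derivation of \eqref{eq:coalescence-linear} via \eqref{eq:mu-upper} and AM--GM, which is exactly the paper's.

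The gap is the refinement, and it is structural, not a matter of care with constants. Every trial matrix you propose has the form $Y=\widetilde Y_1+\widetilde Y_2$ with $Y_i\succeq0$ supported on $V(H_i)$. No matrix of this form can give the constant $\frac29$. Put $c_i=(Y_i)_{vv}$ and $f_i(Y_i)=2\tr(A_iY_i)-\normF{Y_i}^2$. Then $2\tr(A(H)Y)-\normF{Y}^2=f_1(Y_1)+f_2(Y_2)-2c_1c_2$. Since $\tr(X_i(A_i)_-)=0$,
\[
 s_i-f_i(Y_i)=\normF{X_i-Y_i}^2+2\ip{(A_i)_-}{Y_i}\ge(a_i-c_i)^2 .
\]
Hence your total loss is at least $(a_1-c_1)^2+(a_2-c_2)^2+2c_1c_2\ge\min\{a_1,a_2\}^2$ for all $c_1,c_2\ge0$.

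Now take $H_1=H_2=K_2$, so $H=P_3$. Then $a_i=\frac12$ and $\mu_i=\frac34$. Your method loses at least $\frac14$, whereas \eqref{eq:coalescence} allows a loss of at most $\frac29\cdot\frac34=\frac16$. No choice of $\theta_i$, rescaling, or row/column correction can close this gap.

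Your fallbacks are not developed. Note also that \eqref{eq:variational-min} bounds $s^+$ only from above. It would therefore have to be applied to $H_1,H_2$ with data taken from $A(H)_-$. Done that way, it naturally controls the loss by $\bigl((A(H)_+)_{vv}\bigr)^2$, a quantity of $H$, not of the $\mu_{v_i}(H_i)$.

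The missing idea is to couple the two private vertex sets. There are no edges between them, so entries placed there do not affect $\tr(A(H)Y)$; they only cost Frobenius norm. In the paper's notation, $p=a_1$, $q=a_2$, and $r,t$ are the off-diagonal parts of the $v$-columns of $X_1,X_2$, with $Q_1,Q_2$ the private blocks. The paper keeps $r,t,Q_1,Q_2$ intact and puts $z\in[\max\{p,q\},p+q]$ at the identified vertex. It then inserts the cross block $\gamma rt^{\mathsf T}$ with $\gamma=\frac1z-\sqrt{(\frac1p-\frac1z)(\frac1q-\frac1z)}$. A Schur-complement argument using $Q_1\succeq rr^{\mathsf T}/p$ and $Q_2\succeq tt^{\mathsf T}/q$ shows this matrix is positive semidefinite.

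The loss is then exactly $L(z)=z^2-p^2-q^2+2\|r\|^2\|t\|^2\gamma^2$. Set $h=\|r\|\,\|t\|/(pq)$. When $h\le\frac12$, taking $z=\max\{p,q\}$ gives $L\le0$, which is a genuine gain and impossible for your family. When $h>\frac12$, the paper parametrises $z$ by $s$ and chooses $s=\frac13$ or $s=\frac1{2h}$. This yields $L\le\frac29pq(1+2h)\le\frac29\sqrt{\mu_1\mu_2}$.
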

\begin{proof}
With the identified vertices listed first, write
\[
 A(H_1)_+=\begin{pmatrix}p&r^{\mathsf T}\\r&Q_1\end{pmatrix},
 \qquad
 A(H_2)_+=\begin{pmatrix}q&t^{\mathsf T}\\t&Q_2\end{pmatrix}.
\]
Here $p,q>0$. Indeed, a vertex with positive degree has nonzero spectral
weight on both signs, because its first spectral moment is zero and its
second moment is positive. Put
\(
 a=\|r\|^2\) and \( b=\|t\|^2.
\)
Then $\mu_{v_1}(H_1)=p^2+2a$ and $\mu_{v_2}(H_2)=q^2+2b$.
For $\max\{p,q\}\le z\le p+q$, define
\[
 \gamma=\frac1z-
 \sqrt{\left(\frac1p-\frac1z\right)
       \left(\frac1q-\frac1z\right)}.
\]
Consider the symmetric matrix
\[
 Y=\begin{pmatrix}
 z&r^{\mathsf T}&t^{\mathsf T}\\
 r&Q_1&\gamma rt^{\mathsf T}\\
 t&\gamma tr^{\mathsf T}&Q_2
 \end{pmatrix}.
\]
It is positive semidefinite. To see this, use
$Q_1\succeq rr^{\mathsf T}/p$ and
$Q_2\succeq tt^{\mathsf T}/q$, and take the Schur complement of $z$.
Indeed, by the Schur complement criterion~in Lemma~\ref{lem:HJ},
\[
Q_1-\frac{rr^{\mathsf T}}p\succeq0,
\qquad
Q_2-\frac{tt^{\mathsf T}}q\succeq0.
\]
Put
\[
\alpha=\frac1p-\frac1z,\qquad
\beta=\frac1q-\frac1z.
\]
Since $z\ge\max{p,q}$, we have $\alpha,\beta\ge0$, and the definition of $\gamma$ gives
\[
\gamma-\frac1z=-\sqrt{\alpha\beta}.
\]
Hence the Schur complement of the (1,1)-entry $z$ in $Y$ is
\[
\begin{pmatrix}
Q_1-\dfrac{rr^{\mathsf T}}p&0\\
0&Q_2-\dfrac{tt^{\mathsf T}}q
\end{pmatrix}
+
\begin{pmatrix}\sqrt{\alpha}r\\-\sqrt{\beta}t\end{pmatrix}
\begin{pmatrix}\sqrt{\alpha}r\\-\sqrt{\beta}t\end{pmatrix}^{\mathsf T}
\succeq0.
\]
Another application of the Schur complement criterion therefore yields
$Y\succeq0$.

The remaining rank-two part is positive semidefinite because its scalar
coefficient matrix is
\[
 \begin{pmatrix}
 \alpha&-\sqrt{\alpha\beta}\\
 -\sqrt{\alpha\beta}&\beta
 \end{pmatrix},
 \qquad \alpha=\frac1p-\frac1z,\quad
 \beta=\frac1q-\frac1z.
\]
There are no edges between the two sets of private vertices.
Substituting $Y$ into \eqref{eq:variational-max} therefore gives
\begin{equation}\label{eq:Lz-bound}
 s^+(H)\ge s^+(H_1)+s^+(H_2)-L(z),
 \qquad L(z)=z^2-p^2-q^2+2ab\gamma^2.
\end{equation}

We next verify the scalar estimate
\begin{equation}\label{eq:Lz-target}
 \min_{\max\{p,q\}\le z\le p+q}L(z)
 \le\frac29\sqrt{(p^2+2a)(q^2+2b)}.
\end{equation}
Set
\(
 h=\frac{\sqrt{ab}}{pq}\) and \(
 x=\frac{p+q}{\sqrt{pq}}\ge2.
\)
The arithmetic--geometric mean inequality gives
\begin{equation}\label{eq:mu-product}
 \sqrt{(p^2+2a)(q^2+2b)}\ge pq(1+2h).
\end{equation}
If $h\le1/2$, take $z=\max\{p,q\}$. Then
\[
 L(z)=\min\{p^2,q^2\}(2h^2-1)\le0.
\]

Suppose $h>1/2$. For $0<s\le1/3$, put
\(
 \tau=\frac{2s}{x}\) and \(
 z=\sqrt{pq}\,\frac{x-2\tau}{1-\tau^2}.
\)
This $z$ lies in $[\max\{p,q\},p+q]$ and satisfies
$\gamma=\tau/\sqrt{pq}$. For example, if
$\alpha=\sqrt{p/q}\ge1$, the lower endpoint condition is equivalent to
\[
 x-2\tau-\alpha(1-\tau^2)
 =\alpha\left(\tau-\frac1\alpha\right)^2\ge0;
\]
the upper endpoint condition follows from $\tau\le2/x$.
Writing $y=x^2\ge4$, we obtain
\begin{equation}\label{eq:Lz-envelope}
 \frac{L(z)}{pq}
 =y\frac{(y-4s)^2}{(y-4s^2)^2}-y+2+\frac{8h^2s^2}{y}
 \le\frac4{(1+s)^2}-2+2h^2s^2.
\end{equation}
Indeed, the derivative of the first three terms with respect to $y$ is
\[
 \frac{-16s^2(s-1)\bigl(y(3s-1)-4s^2(s+1)\bigr)}{(y-4s^2)^3}
 \le0,
\]
and the final term is also decreasing in $y$.

For $1/2<h\le3/2$, choose $s=1/3$. The right side of
\eqref{eq:Lz-envelope} is $1/4+2h^2/9$, and
\[
 \frac29(1+2h)-\left(\frac14+\frac{2h^2}{9}\right)
 =\frac{7-8(h-1)^2}{36}>0.
\]
For $h\ge3/2$, choose $s=1/(2h)$. In this case
\begin{align*}
 &\frac29(1+2h)
 -\left(\frac4{(1+1/(2h))^2}-\frac32\right)\\
 &\hspace{2em}=
 \frac{32h^3-132h^2+132h+31}{18(2h+1)^2}>0,
\end{align*}
since the numerator equals
\[
 \left(h-\frac94\right)^2(32h+12)+24h-\frac{119}{4}>0
 \qquad(h\ge3/2).
\]
Together with \eqref{eq:mu-product}, these calculations prove
\eqref{eq:Lz-target}. Since
$|V(H)|=|V(H_1)|+|V(H_2)|-1$, \eqref{eq:Lz-bound} gives
\eqref{eq:coalescence}.

Finally, when both vertex deletions are connected, apply
\eqref{eq:mu-upper} and
\[
 \sqrt{(\rho(H_1)+1)(\rho(H_2)+1)}
 \le\frac{\rho(H_1)+\rho(H_2)+2}{2}
\]
to obtain \eqref{eq:coalescence-linear}.
\end{proof}

\section{Main Results}\label{sec:surplus}
\begin{lemma}\label{lem:max-degree}
If $d=\Delta(H)\ge3$ and $d_H(v)=d$, then
\begin{equation}\label{eq:mu-max-degree}
 \mu_v(H)>\frac{19}{9}.
\end{equation}
Consequently, every $2$-connected graph $B$ that is not a cycle satisfies
\begin{equation}\label{eq:block-surplus}
 \rho(B)>\frac{10}{9}.
\end{equation}
\end{lemma}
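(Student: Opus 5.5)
The first inequality is a statement about the spectral measure of $A(H)$ at $v$, and~\eqref{eq:block-surplus} follows from it by one application of Lemma~\ref{lem:deletion}.

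\textbf{Reduction to moments.} Let $A(H)=\sum_i\lambda_i P_i$ be the spectral decomposition, and put $w_i=(P_i)_{vv}\ge0$. These weights satisfy the moment identities
\[
\sum_i w_i=1,\qquad \sum_i w_i\lambda_i=A_{vv}=0,\qquad \sum_i w_i\lambda_i^2=(A^2)_{vv}=d,
\]
\[
\sum_i w_i\lambda_i^3=(A^3)_{vv}\ge0,\qquad \sum_i w_i\lambda_i^4=(A^4)_{vv}=\|A^2e_v\|^2\ge d^2.
\]
In addition, every $\lambda_i$ lies in $[-d,d]$ because $d=\Delta(H)$. Write $p=\sum_{\lambda_i>0}w_i\lambda_i$ and $m=\sum_{\lambda_i>0}w_i\lambda_i^2$. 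Then $(A_+)_{vv}=p$ and $(A_+^2)_{vv}=m$, so
\[
\mu_v(H)=2m-p^2.
\]
Since the first moment vanishes, the negative part has $\sum_{\lambda_i<0}w_i|\lambda_i|=p$ and $\sum_{\lambda_i<0}w_i\lambda_i^2=d-m$. The task is therefore a moment problem: minimize $2m-p^2$ over all probability measures on $[-d,d]$ satisfying these constraints, and show the minimum exceeds $19/9$ for every $d\ge3$.

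\textbf{Solving the moment problem.} First apply Cauchy--Schwarz on each sign class: with $\alpha$ the positive mass, $p^2\le\alpha m$ and $p^2\le(1-\alpha)(d-m)$, hence $p^2\le m(d-m)/d$. This gives
\[
\mu_v\ge m+\frac{m^2}{d}.
\]
It then suffices to bound $m$ from below, for instance to show $m\ge\frac{3}{2}$ when $d=3$. The $K_{1,3}$ centre, where $m=3/2$ and $\mu=9/4$, suggests this is the right target. The two bounds $\lambda\le d$ and $\sum w\lambda^2\le d\sum w|\lambda|$ only give $m\ge d(1-\sqrt m)$, which is far too weak (about $0.63$ for $d=3$). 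So the higher moments must be used. Heuristically, $m_3\ge0$ forces the positive side to carry at least as much cubic mass as the negative side, and $m_4\ge d^2$ prevents the mass from concentrating near $0$.

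I would treat the resulting problem as a finite LP/SDP in the measure. By Carathéodory-type arguments, an extremal measure is supported on at most two points on each side of $0$. I would then prove $2m-p^2>19/9$ with an explicit dual certificate: a polynomial $f(\lambda)=c_0+c_1\lambda+c_2\lambda^2+c_3\lambda^3+c_4\lambda^4$ with $c_3,c_4\le0$ that lies below the objective's integrand. This may have to be done with the quadratic term $p^2$ handled by first fixing $p$. The bound should be checked uniformly in $d\ge3$, with larger $d$ helping since $m\ge$ something of order $d/2$.

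\textbf{Main obstacle.} I expect this extremal moment analysis to be the hard step: finding the right two-point extremal configuration and a certificate valid for all $d\ge3$. If the moments alone turn out insufficient, I would fall back on a combinatorial input. Specifically, $(A^4)_{vv}\ge d^2+\sum_{u\in N(v)}(d_u-1)$, and in the connected, non-star case the extra walk counts enlarge $m_4$.

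\textbf{The block consequence.} Let $B$ be $2$-connected and not a cycle. Since $\delta(B)\ge2$ and $B$ is not $2$-regular connected (which would make it a cycle), we have $\Delta(B)\ge3$. Take $v$ with $d_B(v)=\Delta(B)$. Then $B-v$ is connected, so~\eqref{eq:mu-upper} applies and gives $\rho(B)\ge\mu_v(B)-1>\frac{19}{9}-1=\frac{10}{9}$.
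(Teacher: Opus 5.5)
Your reduction is correct and matches the paper: the spectral weights at $v$, the identity $\mu_v=2m-p^2$, the Cauchy--Schwarz bound $p^2\le m(d-m)/d$ giving $\mu_v\ge m+m^2/d$, and the derivation of \eqref{eq:block-surplus} via \eqref{eq:mu-upper}. But the lower bound on $\mu_v$, which is the whole content of the lemma, is never produced. The route you sketch, bounding $m$ alone, cannot work at $d=3$. Consider the measure $\frac1{10}\delta_3+\frac12\delta_1+\frac25\delta_{-2}$ on $[-3,3]$. It has mass $1$, mean $0$, second moment $3$, third moment $0$ and fourth moment $15\ge 9$, so it meets every constraint you list. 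Yet $m=7/5<3/2$ and $m+m^2/3=154/75<19/9$. So your target $m\ge 3/2$ is false for the moment problem. Moreover, no lower bound on $m$ alone, fed into $\mu_v\ge m+m^2/d$, can reach $19/9$: the Cauchy--Schwarz step loses too much, and $p$ must stay coupled to $m$ (here $2m-p^2=54/25$). Your hedge of ``fixing $p$'' is the right idea, but it has to be carried out. The paper does this with the cubic minorant $t_+^2-\frac8{11}t_+\ge\frac4{33}(t+2)^2(t-\frac34)$ on $[-3,3]$; for $t\ge0$ the difference is $\frac4{33}(t-1)^2(3-t)$. Averaging it and using $m_3\ge0$ gives $m\ge(9+8p)/11$, which is tight exactly at the measure above. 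Then $\mu_v\ge\frac{18+16p}{11}-p^2$ is concave in $p$. Since $\frac12\le p\le\frac{\sqrt3}{2}$ (from $3\le 3\sum_jw_j|\lambda_j|=6p$ and $p^2\le m(3-m)/3\le\frac34$), this gives $\mu_v\ge\frac{93}{44}>\frac{19}{9}$. Your sign condition on the dual polynomial is also reversed. To exploit $m_3\ge0$ or $m_4\ge d^2$ in a lower bound, you need multipliers $c_3,c_4\ge0$; the paper's cubic has leading coefficient $4/33>0$.

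For $d\ge4$ the decoupled plan does suffice, but you are missing the concrete step that uses the third moment. Put $e=d-m$ and combine Cauchy--Schwarz on the negative side, $m_3\ge0$, and $\lambda\le d$:
$e^2\le p\sum_{\lambda_j<0}w_j|\lambda_j|^3\le p\sum_{\lambda_j>0}w_j\lambda_j^3\le pdm$.
Squaring and inserting $p^2\le me/d$ yields $e^3\le dm^3$, so $m\ge d/(1+d^{1/3})$. Hence $\mu_v\ge m+m^2/d\ge d(d^{1/3}+2)/(d^{1/3}+1)^2>360/169>19/9$, using monotonicity in $d$ and $4^{1/3}<8/5$. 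No Carath\'eodory reduction, fourth-moment input or combinatorial fallback is needed for any $d$.
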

\begin{proof}
Choose an orthonormal eigenbasis $u_1,\ldots,u_n$ of $A(H)$ and put
$w_j=u_j(v)^2$. The spectral decomposition gives
\begin{equation}\label{eq:moments}
 \sum_jw_j=1,\quad
 \sum_jw_j\lambda_j=0,\quad
 \sum_jw_j\lambda_j^2=d,\quad
 \sum_jw_j\lambda_j^3\ge0,\quad |\lambda_j|\le d.
\end{equation}
The third moment counts twice the number of triangles containing $v$.
Define
\[
 a=\sum_{\lambda_j>0}w_j\lambda_j,
 \qquad c=\sum_{\lambda_j>0}w_j\lambda_j^2,
 \qquad e=\sum_{\lambda_j<0}w_j\lambda_j^2=d-c.
\]
Then $a=(A(H)_+)_{vv}$ and $\mu_v(H)=2c-a^2$.
The positive and negative first absolute moments both equal $a$.
Cauchy--Schwarz on the two signs, followed by the fact that their total
weight is at most one, gives
\begin{equation}\label{eq:moment-cs}
 a^2\le\frac{ce}{d}.
\end{equation}
Also,
\[
 e^2\le
 a\sum_{\lambda_j<0}w_j|\lambda_j|^3
 \le a\sum_{\lambda_j>0}w_j\lambda_j^3
 \le adc.
\]
Combining this with \eqref{eq:moment-cs} yields $e^3\le dc^3$ and hence
\begin{equation}\label{eq:moment-lower}
 c\ge\frac{d}{1+d^{1/3}},\qquad
 \mu_v(H)\ge c+\frac{c^2}{d}.
\end{equation}
For $d\ge4$, the expression
$d(d^{1/3}+2)/(d^{1/3}+1)^2$ is increasing in $d$, so
\[
 \mu_v(H)\ge
 \frac{d(d^{1/3}+2)}{(d^{1/3}+1)^2}
 >\frac{360}{169}>\frac{19}{9}.
\]
For the strict middle inequality at $d=4$, use $4^{1/3}<8/5$.

For $d=3$, use the following inequality, valid for $-3\le t\le3$:
\begin{equation}\label{eq:cubic-minorant}
 t_+^2-\frac8{11}t_+
 \ge\frac4{33}(t+2)^2\left(t-\frac34\right),
 \qquad t_+=\max\{t,0\}.
\end{equation}
For $t\le0$ the right side is nonpositive; for $t\ge0$ the difference
between the two sides is $\frac4{33}(t-1)^2(3-t)$.
Averaging \eqref{eq:cubic-minorant} using \eqref{eq:moments} gives
\(
 c\ge\frac{9+8a}{11}.
\)
Furthermore,
\(
 \frac12\le a\le\frac{\sqrt3}{2}.
\)
Indeed, $3\le3\sum_jw_j|\lambda_j|=6a$, while
\eqref{eq:moment-cs} gives $a^2\le ce/3\le3/4$.
Concavity in $a$ now implies
\begin{align*}
 \mu_v(H)
 &\ge\frac{18+16a}{11}-a^2\\
 &\ge\min\left\{\frac{93}{44},\frac{39+32\sqrt3}{44}\right\}
 =\frac{93}{44}>\frac{19}{9}.
\end{align*}
This proves \eqref{eq:mu-max-degree}.

If $B$ is $2$-connected and is not a cycle, then $\Delta(B)\ge3$.
For a maximum-degree vertex $v$, the graph $B-v$ is connected. By
Lemma~\ref{lem:deletion} and Theorem~\ref{thm:LTZ},
\[
 s^+(B)\ge |V(B)|-2+\mu_v(B)>|V(B)|+\frac19,
\]
which is \eqref{eq:block-surplus}.
\end{proof}

\begin{samepage}
The following explicit cycle formulas are given in~\cite[Proposition~9.1]{AKMPZ}.

\begin{lemma}\label{lem:cycles}
For $k\ge3$,
\begin{equation}\label{eq:cycles}
 s^+(C_k)=
 \begin{cases}
 k,&k\text{ even},\\
 k+1-\sec(\pi/k),&k\equiv1\pmod4,\\
 k-1+\sec(\pi/k),&k\equiv3\pmod4.
 \end{cases}
\end{equation}
\end{lemma}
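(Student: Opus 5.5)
The plan is to compute the spectrum of $C_k$ explicitly and sum the squares of the positive eigenvalues with a trigonometric identity. The adjacency eigenvalues of $C_k$ are $\lambda_j=2\cos(2\pi j/k)$ for $j=0,\ldots,k-1$, so
\[
 s^+(C_k)=\sum_{\cos(2\pi j/k)>0}4\cos^2(2\pi j/k)=\sum_{\cos(2\pi j/k)>0}\bigl(2+2\cos(4\pi j/k)\bigr).
\]
This reduces the problem to two tasks: counting the indices $j$ with $\cos(2\pi j/k)>0$, and evaluating the partial cosine sum $\sum 2\cos(4\pi j/k)$ over those indices. I would take $j$ symmetric, $-(k-1)/2\le j\le (k-1)/2$ for odd $k$ (with the obvious adjustment for even $k$). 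The positivity condition then becomes $|j|<k/4$, that is, $|j|\le m$ with $m=\lceil k/4\rceil-1$. The count is $2m+1$, and by the Dirichlet kernel
\[
 \sum_{|j|\le m}\cos(4\pi j/k)=\frac{\sin\bigl((2m+1)2\pi/k\bigr)}{\sin(2\pi/k)}.
\]
Hence $s^+(C_k)=2(2m+1)+2\sin((2m+1)2\pi/k)/\sin(2\pi/k)$.

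Next I would split into residues of $k$ modulo $4$.

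\textbf{Even $k$.} A shorter route is available: the spectrum is symmetric because $C_k$ is bipartite, so $s^+=s^-=|E|=k$. This settles the even case immediately.

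\textbf{$k=4\ell+1$.} Here $m=\ell$ and $2m+1=(k+1)/2$, so $(2m+1)2\pi/k=\pi+\pi/k$. Then
\[
 \sin(\pi+\pi/k)/\sin(2\pi/k)=-\sin(\pi/k)/\bigl(2\sin(\pi/k)\cos(\pi/k)\bigr)=-\tfrac12\sec(\pi/k).
\]
This gives $s^+=(k+1)-\sec(\pi/k)$.

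\textbf{$k=4\ell+3$.} Here $m=\ell$ and $2m+1=(k-1)/2$, so the angle is $\pi-\pi/k$ and the ratio is $+\tfrac12\sec(\pi/k)$. This gives $s^+=k-1+\sec(\pi/k)$.

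The main obstacle is mostly bookkeeping. First, one must check that no eigenvalue is exactly zero in the odd cases: $\cos(2\pi j/k)=0$ would require $4j\equiv k\pmod{2k}$, which is impossible for odd $k$. Second, one must get the threshold $m$ right in each residue class. Since the formula is cited from \cite[Proposition~9.1]{AKMPZ}, one could simply invoke it, but the computation above is short enough to verify the statement directly.
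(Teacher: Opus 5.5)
Your computation is correct, and it takes a different route from the paper, which gives no proof of this lemma and simply imports the formula from \cite[Proposition~9.1]{AKMPZ}.

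The steps check out:
\begin{itemize}
\item For odd $k$, the symmetric index range puts $2\pi j/k$ in $(-\pi,\pi)$. So positivity is exactly $|j|<k/4$, which gives $m=\ell$ in both odd residue classes.
\item The Dirichlet kernel with half-angle $2\pi/k$ gives the ratio you state.
\item The angle $(2m+1)2\pi/k$ equals $\pi+\pi/k$ or $\pi-\pi/k$ according as $k\equiv1$ or $3\pmod4$. With $\sin(2\pi/k)=2\sin(\pi/k)\cos(\pi/k)$, this yields $-\frac12\sec(\pi/k)$ or $+\frac12\sec(\pi/k)$ respectively.
\item The bipartite shortcut $s^+=s^-=|E(C_k)|=k$ for even $k$ is valid. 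It also sidesteps the zero eigenvalues that occur when $4\mid k$.
\end{itemize}
Your zero-eigenvalue check in the odd case is harmless but not needed. The strict inequality $|j|<k/4$ already fixes the index set, and a zero eigenvalue would contribute nothing to $\sum 4\cos^2$ anyway.

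Your route makes the lemma self-contained and independently confirms the constants the paper later relies on. These are $k-s^+(C_k)=\sec(\pi/k)-1$ for $k\equiv1\pmod4$ in Lemma~\ref{lem:unicyclic}, and the identification of the exceptions $C_{4k+1}$ in Corollary~\ref{cor:exceptions}. The citation buys only brevity.
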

\end{samepage}

We need the following quantitative consequence of the unicyclic
Coulson identity in Ning and Zeng~\cite[Lemma~2.4 and the proof of
Theorem~1.2]{NZ}.
\begin{lemma}\label{lem:unicyclic}
Every connected unicyclic graph $U$ satisfies
\begin{equation}\label{eq:unicyclic-surplus}
 \rho(U)\ge3-\sqrt5>\frac34.
\end{equation}
\end{lemma}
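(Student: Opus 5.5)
Writing $U$ as a cycle $C_k$ with trees attached, I will prove the bound by using the Coulson integral formula for $s^+$ together with the unicyclic characteristic polynomial identity. This is the route of Ning and Zeng \cite{NZ}, and I will extract a quantitative surplus from it. For every graph,
\[
 s^+(G)-s^-(G)=\sum_j\lambda_j|\lambda_j|,
\]
and $s^++s^-=2|E|$. Hence $s^+(U)=|E(U)|+\tfrac12\sum_j\lambda_j|\lambda_j|$. A connected unicyclic graph has $|E(U)|=|V(U)|$, so
\[
 \rho(U)=1+\tfrac12\sum_j\lambda_j|\lambda_j|.
\]
It therefore suffices to show that $\sum_j\lambda_j|\lambda_j|\ge 4-2\sqrt5$, that is, that the signed square sum is not too negative. (For trees this sum is $0$ by bipartite symmetry, which is the equality case of Theorem~\ref{thm:LTZ}.)

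I first handle the bipartite case. If $k$ is even, the spectrum is symmetric, so the sum is $0$ and $\rho(U)=1>3-\sqrt5$. Now let $k$ be odd. The sum $\sum_j\lambda_j|\lambda_j|$ is an odd function of the spectrum, so I will represent it by a Coulson-type integral of the form
\[
 \frac{c}{\pi}\int_{-\infty}^{\infty}\Bigl(\cdots\log\Bigl|\frac{\phi(U,ix)}{(ix)^n}\Bigr|\ \text{or}\ \arg\phi(U,ix)\cdots\Bigr)\,dx.
\]
The precise kernel is the one that isolates the odd part. For unicyclic graphs, Sachs' theorem gives
\[
 \phi(U,x)=\phi(U-C,x)\ \text{(forest part)}-2\,\phi(U-V(C),x)\ \text{(cycle term)}.
\]
More precisely, $\phi(U)=\phi(F_1)-2\phi(F_2)$, where $F_1$ is $U$ with one cycle edge removed and $F_2=U-V(C_k)$. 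Both $F_1$ and $F_2$ are forests, so their matching polynomials evaluated at $ix$ have a fixed argument pattern. Only the term $-2\phi(F_2)$, which has parity $k$ relative to $\phi(F_1)$, contributes to the odd part. This reduces the sum to an integral of
\[
 \arctan\bigl(2\,m(F_2,x)/m(F_1,x)\bigr)\cdot(\text{weight}),
\]
where $m$ denotes the forest matching generating functions.

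Next I will bound the ratio $2m(F_2,x)/m(F_1,x)$ by the same ratio for the bare cycle $C_k$, using the monotonicity of matching polynomials at imaginary argument under attaching pendant trees. Attaching trees multiplies both $m(F_1)$ and $m(F_2)$ by compatible factors, and it only damps the ratio. This shows that the worst case is the bare cycle, or, among small configurations, the cycle closest to extremal. For the bare cycle, Lemma~\ref{lem:cycles} gives
\[
 \rho(C_k)=2-\sec(\pi/k)\quad\text{for } k\equiv1\pmod 4.
\]
This is at least $2-\sec(\pi/5)=3-\sqrt5$, since $\sec(\pi/5)=\sqrt5-1$. For $k\equiv3\pmod4$ the surplus is $\sec(\pi/k)>1$, and it is larger still for $k=3$. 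Hence the minimum over all cases is $3-\sqrt5$, attained at $C_5$. The numerical claim $3-\sqrt5>3/4$ follows from $\sqrt5<2.25$.

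The main obstacle is the domination step: proving rigorously that the pendant trees cannot make the odd contribution more negative than the bare cycle does. The sign of the cycle term alternates with $k \bmod 4$, and this interacts with the trees. My first attempt will be to establish this pointwise in $x$ inside the integral. If pointwise domination fails, I will fall back on the deletion inequality \eqref{eq:deletion} together with the partition inequality \eqref{eq:partition}, peeling pendant vertices and showing that each peel does not decrease $\rho$.
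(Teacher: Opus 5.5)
Your overall route is the paper's: Ning--Zeng's unicyclic Coulson identity, reduction to the bare cycle, then the explicit cycle formula. But the step you flag as the main obstacle is the entire content of the proof, and you leave it open. The identity you feed into it is also misstated. Sachs' theorem gives $\phi(U,x)=m(U,x)-2m(F,x)$, where $F=U-V(C_k)$ and $m(U,x)$ is the matching polynomial of $U$ itself, i.e.\ $\phi(U-e,x)-\phi(U-u-v,x)$ for a cycle edge $e=uv$. Your $\phi(U-e)-2\phi(F)$ omits the matchings through $e$; for $C_3$ it gives $x^3-2x-2$ instead of $x^3-3x-2$. Write $M_J(t)=\sum_j m_j(J)t^{|V(J)|-2j}$. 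The correct identity gives $\phi(U,it)=i^n\bigl(M_U(t)\pm 2iM_F(t)\bigr)$ for odd $k$, so the kernel is $\arctan\bigl(2M_F(t)/M_U(t)\bigr)$, with denominator $M_U$ rather than the matching function of $U-e$.

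The missing domination argument is then a one-line counting inequality, not a factorization; attaching trees does not multiply matching polynomials by any factor. Every matching of the disjoint union $C_k\cup F$ is a matching of $U$, and $|V(U)|=k+|V(F)|$, so $M_U(t)\ge M_{C_k}(t)M_F(t)$ coefficientwise. Hence $2M_F/M_U\le 2/M_{C_k}$ pointwise for $t>0$, and the integral for $U$ is at most the one for $C_k$. For $k\equiv1\pmod4$ this gives $|V(U)|-s^+(U)\le\sec(\pi/k)-1\le\sqrt5-2$, which is the required bound.

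Your sign bookkeeping fails in the other residue class. For $k\equiv3\pmod4$ the integral enters $s^+(U)-|V(U)|$ with a plus sign, so damping the ratio lowers $\rho(U)$. The bare cycle is the best case there, not the worst, and your comparison only yields the upper bound $\rho(U)\le\rho(C_k)$. What is needed is just nonnegativity of the integral, which gives $\rho(U)\ge1$; your sentence ``the worst case is the bare cycle'' is false for this class.

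The fallback would not rescue the argument either. The deletion inequality gives only $\rho(H)\ge\rho(H-v)+\mu_v(H)-1$, and nothing forces $\mu_v\ge1$ at a pendant vertex; in a bipartite graph it equals $1-(A_+)_{vv}^2<1$. So peeling gives no control over the accumulated loss.
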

\begin{proof}
Let $C_k$ be the unique cycle and let $F=U-V(C_k)$. For a graph $J$,
write
\[
 M_J(t)=\sum_{j\ge0}m_j(J)t^{|V(J)|-2j}\qquad(t>0),
\]
where $m_j(J)$ is the number of matchings of size $j$ and
$M_{\varnothing}(t)=1$.
The unicyclic Coulson identity of Ning and Zeng~\cite[Lemma~2.4 and the
proof of Theorem~1.2]{NZ} gives, for odd $k$,
\begin{equation}\label{eq:unicyclic-integral}
 s^+(U)-|V(U)|=
 \begin{cases}
 -I(U),&k\equiv1\pmod4,\\
 I(U),&k\equiv3\pmod4,
 \end{cases}
 \quad
 I(U)=\frac2\pi\int_0^\infty
 t\arctan\!\left(\frac{2M_F(t)}{M_U(t)}\right)\,dt.
\end{equation}
For even $k$, bipartiteness gives $s^+(U)=|V(U)|$.
Thus only $k\equiv1\pmod4$ needs an estimate.
Every matching of the disjoint union $C_k\cup F$ is also a matching of
$U$, so coefficientwise
\[
 M_U(t)\ge M_{C_k}(t)M_F(t).
\]
Since $\arctan$ is increasing, \eqref{eq:unicyclic-integral} and
Lemma~\ref{lem:cycles} imply
\[
 |V(U)|-s^+(U)
 \le k-s^+(C_k)=\sec(\pi/k)-1\le\sqrt5-2.
\]
Here $k\ge5$ and $\sec(\pi/5)=\sqrt5-1$.
The desired bound follows from \eqref{eq:rho}.
\end{proof}

\begin{lemma}\label{lem:pendant-path}
Let $H$ be connected, let $H-v$ be connected, and suppose
$\rho(H)\ge10/9$. Attach a pendant path of arbitrary length at $v$,
obtaining $H'$. Then
\begin{equation}\label{eq:path-surplus}
 \rho(H')>\frac34.
\end{equation}
\end{lemma}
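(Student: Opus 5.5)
Treat $H'$ as the coalescence of $H_1=H$ and $H_2=P_{\ell+1}$, a path with $\ell\ge1$ edges, identifying $v\in V(H)$ with an endpoint $v_2$ of the path. The plan is to apply the first inequality \eqref{eq:coalescence} of Lemma~\ref{lem:coalescence}, not the linear form. Since $P_{\ell+1}$ is a tree, the remark after Theorem~\ref{thm:LTZ} gives $\rho(P_{\ell+1})=0$. Indeed $s^+=s^-=|E|$ for bipartite graphs, so $s^+(P_{\ell+1})=\ell$ and $\rho=0$. Hence
\[
 \rho(H')\ge\rho(H)-\frac29\sqrt{\mu_v(H)\,\mu_{v_2}(P_{\ell+1})}.
\]

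For the factor coming from $H$: since $H-v$ is connected, \eqref{eq:mu-upper} gives $\mu_v(H)\le\rho(H)+1$. The path factor also needs a bound. The path minus its endpoint is again a path, hence connected, so \eqref{eq:mu-upper} yields $\mu_{v_2}(P_{\ell+1})\le\rho(P_{\ell+1})+1=1$. (For $\ell=1$ one checks directly that $\mu=2\cdot\tfrac12-\tfrac14=\tfrac34\le1$.) Therefore, writing $r=\rho(H)\ge 10/9$,
\[
 \rho(H')\ge r-\frac29\sqrt{r+1}=:f(r).
\]

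It remains to check a one-variable inequality. We have $f'(r)=1-\frac{1}{9\sqrt{r+1}}>0$, so $f$ is increasing. It therefore suffices to evaluate at $r=10/9$:
\[
 f(10/9)=\frac{10}{9}-\frac29\sqrt{\frac{19}{9}}=\frac{10}{9}-\frac{2\sqrt{19}}{27}.
\]
Since $\sqrt{19}<4.36$, this is greater than $1.111-0.323>0.78>3/4$, which gives \eqref{eq:path-surplus}.

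I expect the main obstacle to be checking that Lemma~\ref{lem:coalescence} applies with the stated hypotheses. Both pieces need order at least two, which holds for $H$ whenever $H-v$ is nonempty (as in the intended use) and holds trivially for the path. One also has to confirm that the path's endpoint deletion is connected, so that the bound $\mu\le1$ is legitimate. If $\mu_{v_2}\le1$ failed to follow cleanly, I would fall back on the moment computation of Lemma~\ref{lem:max-degree} with $d=1$. That computation gives $c\le1$, and hence $\mu_{v_2}\le 2c-a^2\le 2-a^2$. Even the crude bound $\mu_{v_2}\le2$ gives $f(r)=r-\frac29\sqrt{2(r+1)}$, and at $r=10/9$ this is about $1.111-0.457=0.654$, which falls short of $3/4$. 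So the sharper estimate $\mu_{v_2}\le1$ from \eqref{eq:mu-upper} is essential.
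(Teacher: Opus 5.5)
Your proof is correct and follows the paper's route: view $H'$ as a coalescence of $H$ with the path, apply \eqref{eq:coalescence} with $\rho(P)=0$ and $\mu_v(H)\le\rho(H)+1$, then check that the resulting function of $\rho(H)$ is increasing and exceeds $3/4$ at $10/9$. The only difference is in the path factor: the paper sharpens it to $\mu_w(P)\le 7/8$ via a H\"older estimate on the endpoint spectral moments, whereas your simpler bound $\mu_w(P)\le\rho(P)+1=1$ from \eqref{eq:mu-upper} already suffices, since $10/9-\frac{2}{27}\sqrt{19}\approx0.788>3/4$ (the zero-length case, which you skip, is trivial).
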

\begin{proof}
The zero-length case is immediate. Otherwise let $P$ be the attached
path before its endpoint $w$ is identified with $v$.
At an endpoint of a nontrivial path, the second spectral moment is one
and the fourth is at most two. H\"older's inequality therefore gives
\(
 |A(P)|_{ww}\ge\frac1{\sqrt2}.
\)
More explicitly, for the endpoint spectral weights,
$$\sum_jw_j\lambda_j^2\le
(\sum_jw_j|\lambda_j|)^{2/3}
(\sum_jw_j\lambda_j^4)^{1/3}.$$
Bipartiteness implies
$(A(P)_+)_{ww}=|A(P)|_{ww}/2$ and
$(A(P)_+^2)_{ww}=1/2$. Thus
\[
 \mu_w(P)=1-(A(P)_+)_{ww}^2\le\frac78,
 \qquad \rho(P)=0.
\]
By Lemmas~\ref{lem:coalescence} and \ref{lem:deletion},
\begin{equation}\label{eq:path-function}
 \rho(H')\ge\rho(H)-\frac29
 \sqrt{\frac78(\rho(H)+1)}.
\end{equation}
The right side is increasing for $\rho(H)\ge10/9$, and at $10/9$ it
is
\[
 \frac{10}{9}-\frac29\sqrt{\frac{133}{72}}
 >\frac79>\frac34.
\]
\end{proof}

\begin{lemma}\label{lem:block-chain}
Let $H$ be obtained from a $2$-connected graph $B$ by successively
attaching complete blocks, including possible bridge blocks $K_2$,
in a chain. Thus the block--cut tree of $H$ is a path with $B$ as an
end block, unless $H=B$. Then
\begin{equation}\label{eq:chain-surplus}
 \rho(H)\ge\frac34.
\end{equation}
\end{lemma}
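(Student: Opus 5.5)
We argue by induction along the chain, carrying a statement stronger than \eqref{eq:chain-surplus} so that the losses from each coalescence stay controlled. Write $H_0=B$ and $H_i=H_{i-1}\cup K^{(i)}$, where the complete block $K^{(i)}=K_{m_i}$ ($m_i\ge2$) is glued at a vertex $v_{i-1}$ of the last block of $H_{i-1}$. For $i\ge1$ the new attachment vertex $v_i$ is a private vertex of $K^{(i)}$. The base case is Lemma~\ref{lem:max-degree}, which gives $\rho(B)>10/9$, so $\rho(B)\ge 3/4$.

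\textbf{Cliques of order at least three.} For a complete graph, $s^+(K_m)=(m-1)^2$, so $\rho(K_m)=(m-1)^2-m+1=(m-1)(m-2)$. This equals $0$ for $K_2$ and is at least $2$ for $m\ge3$. Moreover $K_m-v$ is connected, so Lemma~\ref{lem:deletion} gives $\mu_v(K_m)\le \rho(K_m)+1$. A direct computation gives
\[
\mu_v(K_m)=2\frac{(m-1)^2}{m}-\Bigl(\frac{m-1}{m}\Bigr)^2,
\]
since $A(K_m)_+=\frac{m-1}{m}J$.
\begin{itemize}
\item If $H_{i-1}-v_{i-1}$ is connected (for instance $i=1$, or $v_{i-1}$ private in a clique), then \eqref{eq:coalescence-linear} applies.
\item Otherwise we use \eqref{eq:coalescence} together with \eqref{eq:deletion} on the clique side.
\end{itemize}
For $m\ge3$, \eqref{eq:coalescence} gives
\[
\rho(H_i)\ge\rho(H_{i-1})+\rho(K_m)-\tfrac29\sqrt{\mu_{v}(H_{i-1})\mu_v(K_m)}.
\]
Since $\rho(K_m)$ grows quadratically while $\sqrt{\mu_v(K_m)}$ grows only like $\sqrt m$, attaching a large clique increases the surplus. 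The only delicate factor is $\mu_{v}(H_{i-1})$. At a private vertex of the previous clique, we bound it via \eqref{eq:mu-upper} by $\rho(H_{i-1})+1$. When $v_{i-1}$ lies in $B$ (the case $i=1$), $B-v$ is connected, so again $\mu\le\rho+1$. In every case the attachment vertex of a clique has connected deletion in $H_{i-1}$. This is the key observation: the chain structure forces the attachment vertex to be a private vertex of an end block.

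\textbf{Bridges.} For a run of consecutive $K_2$ blocks, the run is exactly a pendant path, and Lemma~\ref{lem:pendant-path} applies whenever the surplus before the run is at least $10/9$. So we track the invariant
\[
\rho(H_{i})\ge \tfrac{10}{9}\quad\text{whenever the chain ends in a clique } K_m,\ m\ge3 \text{ (or } i=0).
\]
We check that \eqref{eq:coalescence-linear} preserves this invariant when a clique $K_m$, $m\ge3$, is attached:
\[
\tfrac89(\rho+2)-\tfrac29\ge\tfrac89\bigl(\tfrac{10}9+2\bigr)-\tfrac29>\tfrac{10}9.
\]
However, after a pendant-path run the surplus may drop to $3/4$, and then a clique is attached at the path end, where the deletion is disconnected. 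This is the main obstacle. For this case we use \eqref{eq:coalescence} and bound $\mu_w(H_{i-1})$ at the end $w$ of a pendant path by a constant. By Lemma~\ref{lem:deletion}, $\mu_w\le$ (contribution) $\le 1+\rho$-type bounds fail here, so instead we use the endpoint moment argument from Lemma~\ref{lem:pendant-path}: a degree-one vertex has second moment $1$, so $\mu_w\le 2c\le 2$. Then
\[
\rho(H_i)\ge \tfrac34+2-\tfrac29\sqrt{2\cdot\mu_v(K_3)}\ge\tfrac34+2-\tfrac29\sqrt{2\cdot 3}>\tfrac{10}9,
\]
and the bound only improves for larger $m$. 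This restores the invariant.

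\textbf{Conclusion.} Combining the three pieces by induction on the number of blocks, every $H_i$ satisfies $\rho\ge3/4$: it is at least $10/9$ after each clique, and at least $3/4$ after each bridge run by Lemma~\ref{lem:pendant-path}. This gives \eqref{eq:chain-surplus}. The step needing the most care is the uniform bound on $\mu$ at non-cut attachment points, especially at path ends, to make \eqref{eq:coalescence} effective.
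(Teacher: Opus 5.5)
\textbf{Gap: the case where $B$ is a cycle.} Your base case relies on \eqref{eq:block-surplus} of Lemma~\ref{lem:max-degree}, but that inequality gives $\rho(B)>10/9$ only for $2$-connected graphs that are \emph{not cycles}. The statement allows $B$ to be any $2$-connected graph. Cycles also genuinely occur as end blocks where this lemma is used in the proof of Theorem~\ref{thm:main}; a noncomplete end block may be $C_5$, for instance.

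By Lemma~\ref{lem:cycles}, every cycle $C_k$ with $k\ge4$ has $\rho(C_k)<10/9$:
\begin{itemize}
\item $\rho(C_k)=1$ for even $k$;
\item $\rho(C_k)=2-\sec(\pi/k)<1$ for $k\equiv1\pmod4$;
\item $\rho(C_k)=\sec(\pi/k)\le\sec(\pi/7)<10/9$ for $k\equiv3\pmod4$, $k\ge7$.
\end{itemize}
So your invariant fails at $i=0$, and Lemma~\ref{lem:pendant-path} is unavailable for an initial run of bridges. The coalescence bound alone is too weak to repair this: starting from $\rho(C_5)=3-\sqrt5$, the estimate \eqref{eq:path-function} yields only about $0.49$.

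The missing ingredient is Lemma~\ref{lem:unicyclic}. If $B$ is a cycle, then $B$ together with its initial (possibly empty) run of bridges is a connected unicyclic graph, so its surplus is at least $3-\sqrt5>3/4$. After that your argument goes through. The next clique $K_m$, $m\ge3$, is attached at a vertex whose deletion is connected, so \eqref{eq:coalescence-linear} gives surplus at least $\frac89\bigl(\frac34+2\bigr)-\frac29=\frac{20}9$. This restores your $10/9$ invariant.

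\textbf{A smaller point.} Your claim that the end $w$ of a pendant path has disconnected deletion is false. The vertex $w$ has degree one, so $H_{i-1}-w$ is connected, and \eqref{eq:coalescence-linear} applies directly; this is what the paper does. Your workaround is correct but unnecessary. It uses \eqref{eq:coalescence} together with $\mu_w\le2(A_+^2)_{ww}\le2(A^2)_{ww}=2$, where $A=A(H_{i-1})$.

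Once the cycle case is added, your scheme coincides with the paper's proof.
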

\begin{proof}
First attach the initial run of bridges. If $B$ is a cycle, the graph
so obtained is unicyclic, and Lemma~\ref{lem:unicyclic} applies.
Otherwise use Lemmas~\ref{lem:max-degree} and \ref{lem:pendant-path}.
For each subsequently attached complete block $K_t$, $t\ge3$,
\[
 \rho(K_t)=(t-1)(t-2)\ge2.
\]
The attachment vertex is a non-cut vertex on each side before
identification. Hence \eqref{eq:coalescence-linear} gives a new surplus
at least
\[
 \frac89\left(\frac34+2\right)-\frac29=\frac{20}{9}.
\]
A following run of bridges is handled by Lemma~\ref{lem:pendant-path}.
The exit vertex in the last block is again a non-cut vertex of the
current graph. Repeating this argument proves the lemma.
\end{proof}

For a connected graph $H$, let $\cD(H)$ be the cone of doubly
nonnegative matrices $M$ indexed by $V(H)$ such that
$M_{uv}=0$ whenever $u\ne v$ and $uv\notin E(H)$. Define
\begin{equation}\label{eq:kappa}
 \kappa(H)=\sup_{0\ne M\in\cD(H)}\frac{4S_H(M)^2}{T(M)},
 \qquad \defect(H)=q(H)-\kappa(H).
\end{equation}
By Theorem~\ref{thm:LTZ}, $\defect(H)\ge0$.
For complete graphs,
\begin{equation}\label{eq:complete-kappa}
 \kappa(K_t)=q(K_t)=(t-1)^2,
\end{equation}
as is seen by taking $M=\one\one^{\mathsf T}$.

The next lemma is the blockwise form of the cut-vertex splitting in
Liu, Tang, and Zhang~\cite[proof of Theorem~2.1]{LTZ}. The inequalities
for $\kappa$ and $\defect$ are its Cauchy--Schwarz consequences, using
$q(H)=\sum_{B\in\cB(H)}q(B)$.
\begin{lemma}\label{lem:matrix-splitting}
Let $H$ be a connected graph of order at least two and
$M\in\cD(H)$. There are matrices $M_B\in\cD(B)$, one for each block
$B\in\cB(H)$, such that
\begin{equation}\label{eq:matrix-splitting}
 M=\sum_{B\in\cB(H)}\widetilde M_B.
\end{equation}
Here a tilde denotes extension by zero to $V(H)$.
In particular, all edge entries are preserved, and
\begin{equation}\label{eq:block-kappa}
 T(M)=\sum_BT(M_B),\qquad
 \kappa(H)\le\sum_B\kappa(B),\qquad
 \defect(H)\ge\sum_B\defect(B).
\end{equation}
The diagonal entry of a vertex which is not a cut vertex of $H$ is
unchanged in its unique block matrix.
\end{lemma}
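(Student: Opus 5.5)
We argue by induction on the number of blocks of $H$. This mirrors the cut-vertex splitting in the proof of Theorem~2.1 of Liu--Tang--Zhang. The base case is a single block, i.e.\ $H=B$ is $2$-connected or $H=K_2$; there we take $M_B=M$ and there is nothing to prove.

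For the inductive step, pick a leaf block $B_0$ of $H$ with cut vertex $c$. Let $H_1$ be $H$ minus the private vertices of $B_0$; it is connected and has one fewer block. Order the vertices as (private vertices of $B_0$, $c$, private vertices of $H_1$). There are no edges between the two private sets, so the corresponding off-diagonal block of $M$ is zero.

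We split the diagonal entry $M_{cc}=m_0+m_1$ so that the two pieces
\[
N_0=\begin{pmatrix}P&r\\ r^{\mathsf T}&m_0\end{pmatrix},\qquad
N_1=\begin{pmatrix}m_1&t^{\mathsf T}\\ t&Q\end{pmatrix}
\]
are both positive semidefinite. Here $P,Q$ are the principal blocks on the private sets, and $r,t$ are the entries of $M$ joining $c$ to them.

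The key step is choosing this split. Take $m_0=r^{\mathsf T}P^{+}r$, i.e.\ the smallest value making $N_0\succeq0$; this uses the generalized Schur complement, with $r$ in the range of $P$ since $M\succeq0$. The requirement for $N_1$ is then that the Schur complement of $P$ in $M$ be positive semidefinite. That complement is exactly
\[
\begin{pmatrix}M_{cc}-r^{\mathsf T}P^+r&t^{\mathsf T}\\ t&Q\end{pmatrix},
\]
because the cross block between the two private sets vanishes. Lemma~\ref{lem:HJ}, applied in its pseudoinverse form (or via $P+\varepsilon I$ and a limit), gives that this complement is positive semidefinite. We expect this to be the main obstacle, as it needs a careful singular-$P$ argument.

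Nonnegativity is automatic: $m_0\ge0$ and $m_1\ge0$ by positive semidefiniteness, and all other entries are original entries of $M$. Hence $N_0\in\cD(B_0)$ and $N_1\in\cD(H_1)$, and $M=\widetilde N_0+\widetilde N_1$. Applying induction to $N_1$ produces the full decomposition. The diagonal entry of a non-cut vertex is never split, since only the entries at cut vertices are divided.

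The remaining assertions follow from the decomposition. Since $T$ is linear and the tilde-extension preserves $\one^{\mathsf T}(\cdot)\one$, we get $T(M)=\sum_B T(M_B)$. Each edge lies in exactly one block, so $S_H(M)=\sum_B S_B(M_B)$. By the definition of $\kappa$ we have $4S_B(M_B)^2\le\kappa(B)T(M_B)$. Cauchy--Schwarz then gives
\[
4S_H(M)^2\le\Bigl(\sum_B\kappa(B)\Bigr)\sum_BT(M_B),
\]
where we write $\sum_B 2S_B=\sum_B\sqrt{\kappa(B)}\cdot(2S_B/\sqrt{\kappa(B)})$, and blocks with $\kappa(B)=0$ contribute zero. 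This yields $\kappa(H)\le\sum_B\kappa(B)$. Subtracting from $q(H)=\sum_Bq(B)$ then gives the inequality for $\defect$.
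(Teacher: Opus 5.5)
Your proof is correct and takes essentially the paper's route. The paper gives no separate argument: it cites the Liu--Tang--Zhang cut-vertex splitting and obtains the $\kappa$ and $\defect$ inequalities by Cauchy--Schwarz using $q(H)=\sum_B q(B)$. Your leaf-block induction, with the split $M_{cc}=r^{\mathsf T}P^{+}r+(M_{cc}-r^{\mathsf T}P^{+}r)$, is exactly that splitting made explicit. The singular-$P$ step you flag goes through directly: evaluating the quadratic form of $M$ at $(-\alpha P^{+}r,\alpha,y)$ gives the form of $N_1$ at $(\alpha,y)$, and $r\in\operatorname{range}(P)$ gives $N_0\succeq0$.
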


We also use the doubly nonnegative, weak-duality form of the
correlation-matrix estimate in Liu, Tang, and
Zhang~\cite[Section~1.2, equation~(4) and the weighted estimate]{LTZ}.
In the notation~\eqref{eq:kappa}, it reads as follows.
\begin{lemma}\label{lem:certificate}
Let $C$ be a positive semidefinite matrix indexed by $V(H)$, with
$C_{vv}=1$ for every vertex and $C_{uv}<1$ for every edge. Then
\begin{equation}\label{eq:certificate}
 \kappa(H)\le\Phi_H(C):=\sum_{uv\in E(H)}\frac2{1-C_{uv}}.
\end{equation}
\end{lemma}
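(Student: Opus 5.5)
The plan is a direct weak-duality argument: pair an arbitrary $M\in\cD(H)$ with the certificate $C$ through the trace inner product, then finish with Cauchy--Schwarz. Since $\kappa(H)$ is a supremum, it suffices to show that every nonzero $M\in\cD(H)$ satisfies
\[
 4S_H(M)^2\le\Phi_H(C)\,T(M).
\]
First I check that the ratio in \eqref{eq:kappa} is well defined. A nonzero positive semidefinite matrix has a positive diagonal entry, and every entry of $M$ is nonnegative, so $T(M)\ge\tr M>0$.

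The key step is to bound $T(M)$ from below by an edge-weighted sum. Because $M$ vanishes off the diagonal except on edges of $H$, and every edge is counted once,
\[
 T(M)=\sum_{v}M_{vv}+2\sum_{uv\in E(H)}M_{uv}.
\]
Similarly, using $C_{vv}=1$,
\[
 \tr(MC)=\sum_v M_{vv}+2\sum_{uv\in E(H)}C_{uv}M_{uv}.
\]
Both $M$ and $C$ are positive semidefinite, so $\tr(MC)\ge0$. Subtracting the second identity from the first gives
\[
 T(M)\ \ge\ T(M)-\tr(MC)=2\sum_{uv\in E(H)}(1-C_{uv})M_{uv}.
\]

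The last step is Cauchy--Schwarz. On each edge we have $M_{uv}\ge0$ and $1-C_{uv}>0$, so we may write $\sqrt{M_{uv}}=\sqrt{(1-C_{uv})M_{uv}}\cdot(1-C_{uv})^{-1/2}$. This yields
\[
 S_H(M)^2\le\Bigl(\sum_{uv\in E(H)}(1-C_{uv})M_{uv}\Bigr)\Bigl(\sum_{uv\in E(H)}\frac1{1-C_{uv}}\Bigr)\le\frac{T(M)}2\cdot\frac{\Phi_H(C)}2.
\]
Multiplying by $4$ gives the desired inequality, and taking the supremum over $M$ proves \eqref{eq:certificate}.

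I expect no real obstacle. The only points needing care are that the cross terms of $T(M)$ are supported exactly on edges, which is the defining property of $\cD(H)$, and that the hypothesis $C_{uv}<1$ on edges is what makes the Cauchy--Schwarz weights positive.
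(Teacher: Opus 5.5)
Your argument is correct and complete. Pairing $M$ with $C$ via $\tr(MC)\ge0$ gives $T(M)\ge2\sum_{uv\in E(H)}(1-C_{uv})M_{uv}$, and weighted Cauchy--Schwarz then yields $4S_H(M)^2\le\Phi_H(C)\,T(M)$. The paper gives no proof of its own, only attributing the lemma to the weak-duality form of Liu, Tang, and Zhang's correlation-matrix estimate, and your derivation is exactly that weak-duality step written out.
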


We construct such matrices as Gram matrices of unit vectors.
For a clique $K_t$, the regular simplex has pairwise correlations
$-1/(t-1)$, and its cost is
\begin{equation}\label{eq:simplex-cost}
 \Phi_{K_t}=(t-1)^2=q(K_t).
\end{equation}
Any prescribed pair of unit vectors with correlation $-1/(t-1)$ can
be completed to this simplex, by an isometry and additional orthogonal
dimensions.

We will also use a path construction. For a path with $L$ edges and
terminal correlation $c$, put
$\theta=\arccos((-1)^Lc)$. Vectors chosen along a planar arc, with
alternating signs, give every edge correlation
$-\cos(\theta/L)$. Whenever this is less than one, the resulting cost is
\begin{equation}\label{eq:path-certificate}
 \Phi=L\sec^2\frac{\theta}{2L}
 =L+L\tan^2\frac{\theta}{2L}.
\end{equation}
For example, take
$x_j=(-1)^j(\cos(j\theta/L),\sin(j\theta/L))$, $0\le j\le L$.
For fixed $a>0$, the function $L\tan^2(a/L)$ decreases whenever
$0<a/L<\pi/2$; differentiating and using
$\tan z<z\sec^2z$ verifies this assertion.

An \emph{ordinary $u$--$v$ clique-chain} is a block graph with at least
two blocks whose block--cut tree is a path, with $u$ and $v$ non-cut
vertices in distinct end blocks. A path with two edges is included.
For $s\ge2$, put
\(
 T_s=K_{s+2}-uv,
\)
with the two nonadjacent vertices $u,v$ as terminals.

\begin{lemma}\label{lem:terminal-certificates}
Every ordinary $u$--$v$ clique-chain $P$, and every $T_s$, admits a unit
vector certificate with orthogonal terminal vectors and
\begin{equation}\label{eq:terminal-certificate}
 \Phi_P\le q(P)+\eta(P),
\end{equation}
where one may take
\begin{equation}\label{eq:eta-values}
 \eta(P)=
 \begin{cases}
 6-4\sqrt2,&P\text{ is a path with two edges},\\
 2/7,&P\text{ is any other ordinary clique-chain},\\
 \displaystyle\frac{4s}{(s+2)(s^2+2s-1)},&P=T_s.
 \end{cases}
\end{equation}
In particular, every value is at most $6-4\sqrt2$, and every value
except that for the two-edge path is at most $2/7$.
\end{lemma}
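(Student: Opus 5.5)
The plan is to build every certificate explicitly as a Gram matrix of unit vectors, block by block along the block--cut path. Because the blocks of a clique-chain share only cut vertices, I can fix the entry and exit vectors of each block and then complete it inside fresh orthogonal dimensions. Lemma~\ref{lem:certificate} and the additivity $q(P)=\sum_B q(B)$ then reduce \eqref{eq:terminal-certificate} to a sum of per-block excesses $\Phi_B-q(B)$. This sum has to be kept within $\eta(P)$ while forcing the terminal correlation $\ip{x_u}{x_v}$ to be $0$.

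\emph{Paths.} I use \eqref{eq:path-certificate} with $c=0$, so $\theta=\pi/2$ for either parity of $L$.
\begin{itemize}
\item For $L=2$ the cost is $2\sec^2(\pi/8)=4/(1+\tfrac{\sqrt2}{2})=8-4\sqrt2$. Since $q=2$, this gives exactly $\eta=6-4\sqrt2$.
\item For $L\ge3$ the excess $L\tan^2(\pi/(4L))$ is decreasing in $L$, by the monotonicity already noted.
\item At $L=3$ the excess is $3(2-\sqrt3)^2=3(7-4\sqrt3)<2/7$.
\end{itemize}

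\emph{Chains containing a clique $K_t$ with $t\ge3$.} Here the idea is flexibility. The simplex \eqref{eq:simplex-cost} costs exactly $q(K_t)$ and gives entry/exit angle $\psi_t=\arccos(-1/(t-1))\in(\pi/2,2\pi/3]$. By \eqref{eq:simplex-cost} and the completion remark, this angle is realized between any prescribed pair of unit vectors at that angle. Bridges with correlation $-1$ merely negate the current vector and also cost exactly $q(K_2)=1$.
\begin{itemize}
\item \emph{Two or more cliques.} Consecutive cliques can be rotated out of a common plane. With two cliques of angles $\psi_1,\psi_2$, the angle between the outer vectors then sweeps the whole interval $[|\psi_1-\psi_2|,\min\{\pi,\psi_1+\psi_2\}]$. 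This interval contains $\pi/2$, and negation by bridges preserves orthogonality. So such a chain has a certificate with excess $0$.
\item \emph{Exactly one clique $K_t$ plus at least one bridge.} This is the tight case, since rigidity forces the terminal angle to be $\psi_t$ or $\pi-\psi_t$. I must perturb. The plan is to open one bridge to correlation $-\cos\phi$ and shrink the clique angle to $\psi=\pi/2+\phi$, or to combine them in the plane so that they sum to the needed angle. For the clique I use the optimal completion with two vectors at angle $\psi$ and the remaining $t-2$ vectors placed symmetrically.
\end{itemize}
I then minimize the sum of the two excesses over $\phi$. The worst case should be $t=3$ with a single bridge, where a one-variable calculus computation has to come in below $2/7$. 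Larger $t$ has $\psi_t$ closer to $\pi/2$, so it needs less perturbation. More bridges can share the rotation, which only helps by the same convexity as in the path case.

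\emph{$T_s$.} I take $x_u\perp x_v$ and let the remaining $s$ vectors be $\alpha(x_u+x_v)+\beta w_i$. The $w_i$ are orthonormal to $x_u,x_v$ but form a regular simplex among themselves, with correlation $-1/(s-1)$ for $s\ge2$. The free parameters are $\alpha$ and $\beta$, subject to $2\alpha^2+\beta^2=1$. I optimize the cost
\[
2s\cdot\frac{2}{1-\alpha}+\binom{s}{2}\frac{2}{1-C}
\]
with $C$ the common inner correlation. The claim is that the optimum, compared with $q(T_s)=2|E|-(s+2)+1$, equals $4s/((s+2)(s^2+2s-1))$. I would verify this by locating the stationary point, which should have $\alpha<0$, so that $u$ and $v$ are pushed apart from the others.

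The main obstacle is the single-clique-with-bridges case. It is the only place where a genuine trade-off occurs, so the constant $2/7$ must come out of an explicit optimization. If the symmetric deformation of $K_3$ is not good enough, I would first try spreading the rotation over all bridges and both clique edges incident to the terminals, using the decreasing behaviour of $L\tan^2(a/L)$.
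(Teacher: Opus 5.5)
Your structure is the paper's: the same $c=0$ path certificate (your $L=2$ and $L=3$ values are right), the same zero-excess argument for two or more cliques, and the same ansatz $z_i=\alpha(x_u+x_v)+\beta w_i$ for $T_s$. There is a small slip in the two-clique case. Since $\psi_1+\psi_2>\pi$, the sweep ends at $2\pi-\psi_1-\psi_2\ge2\pi/3$, not at $\pi$. This is harmless: $\pi/2$ is still reached, intermediate bridges only replace $\psi_1$ by $\pi-\psi_1$, and further cliques can be placed in $x_u^{\perp}$.

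The real gap is that the two steps which produce the numbers in $\eta$ are not established, and you have misjudged which one is hard.

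\textbf{One clique with bridges.} This case is not tight and needs no trade-off. Take $\psi=\psi_t$ in your own family. That is, keep the clique a regular simplex, make every other bridge antipodal, and give one bridge correlation $-\cos\phi$ with $\sin\phi=1/(t-1)\le1/2$. The vectors $x_u$, the clique exit and $x_v$ have a Gram matrix with off-diagonal absolute values $a=1/(t-1)$, $\sqrt{1-a^2}$ and $0$. Its determinant is $1-a^2-(1-a^2)=0$, so these vectors exist. The only excess is then $\tan^2(\phi/2)\le\tan^2(\pi/12)=7-4\sqrt3<2/7$. As written, your proposal gives no bound here, only the expectation that an unperformed optimisation ``should'' come in under $2/7$.

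\textbf{The graphs $T_s$.} Here your stated claim is false. With $\alpha=-p$ the cost is $\Phi(p)=\frac{4s}{1+p}+\frac{(s-1)^2}{1-2p^2}$, and $\eta(T_s)$ is exactly $\Phi(1/(s+1))-q(T_s)$. But $\Phi'(1/(s+1))<0$, because this is equivalent to $s(s^2+2s-1)^2-(s-1)^2(s+1)(s+2)^2=s^4+3s^3+3s^2+s-4>0$.

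So the stationary point, a root of a quartic, gives a strictly smaller excess: about $0.18$ rather than $2/7$ when $s=2$. Locating it will therefore not reproduce the formula you are trying to match. Since only an upper bound is needed, the missing step is simply to evaluate at $p=1/(s+1)$.

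It is this case, not the one-clique chain, that determines the constant, since $\eta(T_2)=2/7$ exactly. So the ``in particular'' clause also needs $(s+2)(s+2-1/s)\ge14$ for $s\ge2$, which your plan does not check.
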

\begin{proof}
First suppose $P$ is a path with $L\ge2$ edges. Take $c=0$ in
\eqref{eq:path-certificate}. Its excess over $q(P)=L$ is
$
 L\tan^2\frac{\pi}{4L}.
$
For $L=2$ this is $6-4\sqrt2$; for $L\ge3$ it is at most
$3\tan^2(\pi/12)=21-12\sqrt3<2/7$.

Now suppose there is exactly one clique block of order $t\ge3$.
There is at least one bridge. Give every bridge but one antipodal
endpoint vectors; these bridges merely introduce signs into the
remaining terminal constraints. Write $a=1/(t-1)\le1/2$.
For the remaining clique and bridge choose correlations of absolute
values $a$ and $\sqrt{1-a^2}$, respectively, and give the final terminals
correlation zero. The required three-vector Gram matrix is positive
semidefinite, because it has diagonal entries one and determinant
$1-a^2-(1-a^2)=0$. Choose the actual bridge correlation to be
$-\sqrt{1-a^2}$, absorbing any intervening antipodal signs into the
three prescribed vectors. Complete the clique to its regular simplex.
Only the chosen bridge has excess, and that excess is at most
\[
 \frac2{1+\sqrt3/2}-1=7-4\sqrt3<\frac27.
\]

If there are at least two clique blocks of order at least three, use
regular simplices in the first two such blocks. After incorporating
the intervening antipodal bridges, the two consecutive prescribed
correlations have absolute values at most $1/2$.
They can be realized with the output vector orthogonal to the initial
terminal: the relevant three-vector Gram matrix has the form
\[
 \begin{pmatrix}1&a&0\\a&1&b\\0&b&1\end{pmatrix},
 \qquad |a|,|b|\le\frac12,
\]
and determinant $1-a^2-b^2\ge1/2$.
All subsequent clique simplices and bridges can be placed in the
orthogonal complement of the initial terminal. Thus all blocks have
exactly their costs in \eqref{eq:simplex-cost}, and the excess is zero.
This finishes the ordinary-chain case.

For $T_s$, take orthogonal unit terminal vectors $x_u,x_v$ and a regular
simplex $w_1,\ldots,w_s$ in their common orthogonal complement.
Set $p=1/(s+1)$ and
\[
 z_i=-p(x_u+x_v)+\sqrt{1-2p^2}\,w_i\qquad(1\le i\le s).
\]
Then
\[
 \ip{x_u}{z_i}=\ip{x_v}{z_i}=-\frac1{s+1},\qquad
 \ip{z_i}{z_j}=-\frac{s^2+1}{(s-1)(s+1)^2}\quad(i\ne j).
\]
Since $q(T_s)=s^2+2s-1$, direct summation yields
\begin{align*}
 \Phi_{T_s}
 &=\frac{4s(s+1)}{s+2}
   +\frac{(s-1)^2(s+1)^2}{s^2+2s-1},\\
 \Phi_{T_s}-q(T_s)
 &=\frac{4s}{(s+2)(s^2+2s-1)}\le\frac27.
\end{align*}
The last inequality follows from
$(s+2)(s+2-1/s)\ge14$ for $s\ge2$.
\end{proof}

A \emph{cyclic clique-chain} is obtained from a cycle on distinct
vertices $z_0,\ldots,z_{\ell-1}$, $\ell\ge3$, by replacing each cycle
edge $z_jz_{j+1}$ by a clique $K_{t_j}$ containing its two endpoints;
the additional vertices of the cliques are mutually disjoint.
Indices are read modulo $\ell$ and $t_j\ge2$.
For this graph $B$,
\begin{equation}\label{eq:cyclic-count}
 q(B)=\sum_{j=0}^{\ell-1}(t_j-1)^2+1.
\end{equation}

\begin{lemma}\label{lem:cyclic-certificate}
Every noncomplete cyclic clique-chain $B$ admits a correlation
certificate satisfying
\begin{equation}\label{eq:cyclic-certificate}
 \Phi_B\le q(B)-\frac13.
\end{equation}
\end{lemma}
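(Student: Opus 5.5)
The plan is to build the certificate of Lemma~\ref{lem:certificate} as a Gram matrix of unit vectors, one vector per vertex. Each clique $K_{t_j}$ gets a regular simplex wherever possible, since it then costs exactly $(t_j-1)^2$ by \eqref{eq:simplex-cost}. By \eqref{eq:cyclic-count} this leaves a total excess budget of $1-\frac13=\frac23$ for closing up the cycle.

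The only global constraint is on the cycle vectors $x_0,\dots,x_{\ell-1}$ attached to $z_0,\dots,z_{\ell-1}$. Consecutive correlations must equal $-1/(t_j-1)$ on blocks where we insist on zero excess, and $-1$ on bridges. Once the $x_j$ are fixed, each clique is completed to its simplex in fresh orthogonal dimensions, as remarked after \eqref{eq:simplex-cost}. Antipodal bridges only flip signs, so a maximal run of bridges collapses to a sign. Let $m$ be the number of blocks with $t_j\ge3$. Noncompleteness together with $\ell\ge3$ rules out the bare triangle.

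\emph{Case $m=0$.} Then $B=C_\ell$ with $\ell\ge4$, and $q(B)=\ell+1$. Apply the path construction \eqref{eq:path-certificate} with $L=\ell$ and terminal correlation $c=1$, so that the endpoints coincide. For even $\ell$ we get $\theta=0$ and zero excess. For odd $\ell\ge5$ we get $\theta=\pi$ and excess $\ell\tan^2(\pi/2\ell)\le5\tan^2(\pi/10)<\frac23$, using the monotonicity noted after \eqref{eq:path-certificate}.

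\emph{Case $m=1$.} Let the clique be $K_t$ with $a=1/(t-1)\le\frac12$. The remaining $L=\ell-1\ge2$ bridges form a path between the two terminals of the clique. Keep the simplex on $K_t$ and apply \eqref{eq:path-certificate} to the bridge path with terminal correlation $-a$. Then $\theta=\arccos((-1)^{L+1}a)\le 2\pi/3$, and the excess is at most $L\tan^2(\pi/3L)\le2\tan^2(\pi/6)=\frac23$. This meets the budget, with equality only for $K_3$ plus a two-bridge path.

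\emph{Case $m\ge3$.} After collapsing bridge runs to signs, we need a closed spherical polygon in which every side has angle $\arccos(\pm a_j)\in[\pi/3,2\pi/3]$. Each side is then at most the sum of the others, since $2\pi/3\le2\cdot\pi/3$. For $m=3$ the perimeter is at most $2\pi$, so a spherical triangle exists in $\mathbb R^3$. For $m\ge4$ one can fold the polygon in enough dimensions, for instance by a planar closing argument on the last two sides. Hence the excess is zero.

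\emph{Case $m=2$ (expected main obstacle).} Here closure forces two prescribed correlations between the same pair of vectors, after signs, to agree. This generally fails, so some block must be perturbed. Since $\ell\ge3$, at least one bridge is present. I would keep both simplices and choose the two clique terminal pairs so that their outer vectors have correlation $c$ with $|c|\le\frac12$, using the three-vector Gram argument from the proof of Lemma~\ref{lem:terminal-certificates}. The intervening bridges would then be bent by \eqref{eq:path-certificate} with $\theta\le2\pi/3$, giving excess at most $L\tan^2(\pi/3L)$. This is at most $\tan^2(\pi/3)$ when $L=1$, which is too large. So in the one-bridge subcase I would instead bend one clique away from its simplex, as in the $T_s$ construction, and check the estimate. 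I expect this case analysis to be where most of the work lies.
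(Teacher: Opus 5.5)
Your framework is the same as the paper's: regular simplices on the nontrivial cliques, antipodal bridges collapsed to signs, and an excess budget of $\frac23$ over $\sum_j(t_j-1)^2$. Your cases $m=0$, $m=1$ and $m=3$ are correct and match the paper's argument; the paper does $m=3$ by diagonal dominance rather than spherical triangle inequalities.

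\textbf{The gap: case $m=2$.} You leave this case unresolved, and the obstruction you report comes from a pessimistic choice, not a real difficulty.

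\begin{itemize}
\item In your three-vector Gram matrix the outer correlation $c$ is a free parameter. A $3\times3$ matrix with unit diagonal and all off-diagonal entries of absolute value at most $\frac12$ is diagonally dominant, hence positive semidefinite. So every $c\in[-\frac12,\frac12]$ is compatible with the two clique correlations $\pm1/(t_j-1)$.
\item You bounded the bent bridge run using only $\theta\le 2\pi/3$, which is the worst admissible $c$.
\item Choose instead $c$ with $(-1)^Lc=\frac12$. Then $\theta=\pi/3$, and the excess is $L\tan^2(\pi/(6L))\le\tan^2(\pi/6)=\frac13$. For $L=1$ this is just a bridge with correlation $-\frac12$, which costs $\frac43$.
\end{itemize}

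This is the paper's argument in its simplest form. Compress every bridge except one (one exists since $\ell\ge3$), and give the retained bridge correlation $-\frac12$. Realize the three resulting cyclic constraints by the diagonally dominant $3\times3$ matrix, and complete the cliques to simplices. The total excess is $\frac13$.

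No deformation of a clique away from its simplex is needed. Your suggested $T_s$-style bending is neither carried out nor checked. As written, the case $m=2$ is unproved, in particular $\ell=3$, or $\ell=4$ with one bridge on each side.

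\textbf{A smaller soft spot: case $m\ge4$.} Your remark that the polygon can be folded in enough dimensions by a planar closing argument on the last two sides is an assertion, not a proof. The claim is true, and the one-line justification is again diagonal dominance. Take the $m\times m$ matrix with unit diagonal, the $m$ signed cyclic correlations (each of absolute value at most $\frac12$) on consecutive pairs, and zeros elsewhere. Its off-diagonal absolute row sums are at most $1$, so it is the Gram matrix of unit vectors realizing all constraints with zero excess. This handles every $m\ge3$ at once, including your spherical-triangle case.
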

\begin{proof}
Let $k$ be the number of clique links of order at least three.
A bridge link has minimum cost one, achieved by antipodal vectors.
A nontrivial clique link has the cost in \eqref{eq:simplex-cost}.
In view of \eqref{eq:cyclic-count}, it is enough to obtain total excess
at most $2/3$ over the sum of these link costs.

Suppose first that $k\ge3$. Give all bridge links antipodal vectors and
absorb their signs into the endpoints of the nontrivial links.
The resulting $k$ cyclic constraints have correlations of absolute
value at most $1/2$. They can all be realized: the matrix with diagonal
one, those correlations on the cycle edges, and zero on all other
pairs is symmetric diagonally dominant and positive semidefinite.
Complete every nontrivial clique to a regular simplex. There is no
excess.

If $k=2$, leave one bridge link uncompressed; one exists because
$\ell\ge3$. The three remaining cyclic constraints have absolute
correlations at most $1/2$, so the same diagonally dominant
three-by-three matrix realizes them. Choose correlation $-1/2$ on the
retained bridge, which has cost $4/3$. All other links have their exact
simplex costs. The total excess is $1/3$.

If $k=1$, let the nontrivial clique have order $t\ge3$. The remaining
links form a path of length $L\ge2$ between its two ports; $L=1$ would
repeat an edge already in the clique. Give the clique its regular
simplex, so the port correlation is $c=-1/(t-1)$.
For the connecting path, \eqref{eq:path-certificate} has
$\theta=\arccos((-1)^Lc)\le2\pi/3$. Its excess is at most
\[
 L\tan^2\frac{\pi}{3L}\le2\tan^2\frac\pi6=\frac23.
\]

Finally, if $k=0$, then $B$ is a cycle. For even $\ell$, use alternating
antipodal vectors, with no excess. For odd $\ell$, noncompleteness gives
$\ell\ge5$. A planar assignment with edge correlations
$-\cos(\pi/\ell)$ has excess
\[
 \ell\tan^2\frac{\pi}{2\ell}
 \le5\tan^2\frac\pi{10}=5-2\sqrt5<\frac23.\qedhere
\]
\end{proof}

Call a vertex $w$ of a $2$-connected graph $B$ \emph{exceptional} if
$B-w$ is a block graph.

\begin{lemma}\label{lem:two-exceptional}
Let $B$ be a noncomplete $2$-connected graph with two distinct
exceptional vertices $u,v$. Let $C_1,\ldots,C_h$ be the components of
$B-\{u,v\}$, and put $
 P_i=B[C_i\cup\{u,v\}]-uv.$
Every $P_i$ is either an ordinary $u$--$v$ clique-chain or a graph $T_s$.
If $e=1$ when $uv\in E(B)$ and $e=0$ otherwise, then
\begin{equation}\label{eq:parallel-q}
 q(B)=\sum_{i=1}^h q(P_i)+h-1+2e.
\end{equation}
\end{lemma}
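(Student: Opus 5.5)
The proof has two parts: a combinatorial description of the pieces $P_i$, and the count \eqref{eq:parallel-q}. The count is routine bookkeeping. Every edge of $B$ other than $uv$ lies in exactly one $P_i$, since there are no edges between distinct components $C_i$. Hence $|E(B)|=\sum_i|E(P_i)|+e$. Every vertex other than $u,v$ lies in exactly one $C_i$, so $|V(B)|=\sum_i(|V(P_i)|-2)+2$. Substituting into $q=2|E|-|V|+1$ gives
\[
 q(B)=\sum_{i=1}^h\bigl(2|E(P_i)|-|V(P_i)|+1\bigr)+2e+2h-2-h+1,
\]
which is \eqref{eq:parallel-q}. I will do this last.

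For the structure, I first record two facts. The first: a connected induced subgraph of a block graph is a block graph. Every $2$-connected subgraph lies inside a single block, which is a clique, and so it is itself a clique. Since $B-u$ and $B-v$ are block graphs, $B[C_i\cup\{v\}]$ and $B[C_i\cup\{u\}]$ are block graphs; connectivity of these uses the next fact. The second: by $2$-connectivity of $B$, both $u$ and $v$ have neighbours in every $C_i$. Otherwise the other terminal would separate $C_i$ from the rest of $B$. Hence $P_i$ is connected.

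Next I locate the cut vertices of $P_i$. Let $x$ be a cut vertex of $P_i$. Then some component of $P_i-x$ avoids $\{u,v\}$, or $u$ and $v$ lie in different components. In the first case $x$ would be a cut vertex of $B$, because the only extra edge, $uv$, does not touch that component. This is impossible. So every cut vertex separates $u$ from $v$, which forces the block--cut tree of $P_i$ to be a path. It also forces $u$ and $v$ to be non-cut vertices lying in the two end blocks.

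Now suppose $P_i$ has at least two blocks. Each block $Q$ misses $u$ or $v$, say $v$. Then $Q$ is a $2$-connected subgraph (or a bridge) of the block graph $B[C_i\cup\{u\}]$, so $Q$ is complete. Thus $P_i$ is an ordinary $u$--$v$ clique-chain. This includes the two-edge path $u$--$a$--$v$, which arises when $C_i$ is a single vertex.

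The step I expect to be the main obstacle is the remaining case, where $P=P_i$ is a single $2$-connected block containing both $u$ and $v$, with $uv\notin E(P)$. Here I want to show $P=T_s$ with $s=|C_i|\ge2$. My plan is to show that $P-u$ is a clique. First I try to show $P-u$ has no cut vertex. Suppose $y$ were one. By $2$-connectivity of $P$, the vertex $u$ has neighbours in each component of $P-u-y$. Take a shortest path through $u$ joining two such components; together with a path through $y$ it closes a cycle in $P$. I then want to derive a contradiction from the requirement that $P-v$ is a block graph, using that $C_i$ is connected. The delicate point is handling whether this cycle passes through $v$. Once $P-u$ is $2$-connected, or is $K_2$, it is a single block of the block graph $B[C_i\cup\{v\}]$, hence complete. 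Symmetrically, $P-v$ is complete. So all pairs other than $uv$ are adjacent, giving $P=K_{s+2}-uv=T_s$. Finally, $s\ge2$ because $s=1$ would make $P$ a two-edge path, which is not $2$-connected.
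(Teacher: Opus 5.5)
Your edge and vertex count is correct. So is your treatment of the case where $P_i$ has at least two blocks, and there you argue differently from the paper. You locate the cut vertices of $P_i$ directly: each one must separate $u$ from $v$, since otherwise it would be a cut vertex of $B$. The paper instead first shows that each terminal attaches to $C_i$ either at one vertex or along an entire block of $C_i$, and then splits on whether both terminals attach to the same block. One wording point: a $2$-connected subgraph of a clique need not be complete (e.g. $C_4\subseteq K_4$). What you actually use, correctly, is that blocks are induced subgraphs, so a block of an induced subgraph of a block graph is complete.

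The gap is the single-block case, which is exactly the case that produces $T_s$. You never prove that $P-u$ has no cut vertex. The proposal stops at ``I then want to derive a contradiction'' and leaves the ``delicate point'' open, so the $T_s$ conclusion is unproved as written.

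The missing idea is short. Because $uv\notin E(P)$ and $(P-v)-u=C_i$ is connected, $u$ is not a cut vertex of the block graph $P-v=B[C_i\cup\{u\}]$. Hence $u$ lies in exactly one block of $P-v$, which is a clique, so $N_P(u)$ is a clique. If $y$ were a cut vertex of $P-u$, your neighbours $a_1,a_2$ of $u$ in distinct components of $P-u-y$ would be adjacent, a contradiction.

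The same fix works in your cycle formulation. Note first that $y\ne v$, since $P-u-v=C_i$ is connected. Join $a_1$ and $a_2$ to $y$ by paths inside $C_i$, each stopped at its first visit to $y$. These paths avoid $v$, so the resulting cycle lies in the block graph $P-v$ and hence inside one clique, again forcing $a_1a_2\in E$. This is the observation the paper uses at the start of its proof. With it, your argument that $P-u$ and $P-v$ are complete, and hence $P=T_s$ with $s=|C_i|\ge2$, goes through.
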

\begin{proof}
Every component $C_i$ has a neighbour of both $u$ and $v$, since
otherwise $B-u$ or $B-v$ would be disconnected. Induced connected
subgraphs of block graphs are block graphs: after intersecting each
clique block with the chosen vertex set, any nonempty intersections
are still cliques and can meet only at the original cut vertices.
Thus $C_i$, $B[C_i\cup\{u\}]$, and $B[C_i\cup\{v\}]$ are block graphs.

Fix $C=C_i$. In $B[C\cup\{u\}]$, the vertex $u$ is not a cut vertex,
since deleting it leaves $C$ connected. Therefore $u$ belongs to one
clique block, and $N_C(u)$ is the vertex set of that block minus $u$.
If $|N_C(u)|\ge2$, this set must be an entire block of $C$.
Indeed, otherwise a larger clique block of $C$ and the block containing
$u$ would intersect in at least two vertices, which is impossible for
distinct blocks. The same applies to $v$.
Hence each terminal attaches either at one vertex of $C$, or to every
vertex of one block of $C$.

If both terminals attach to the same block $K$ of $C$ of order at least
two, then $C=K$: any part of $C$ outside $K$ would be separated from both
terminals by the cut vertex through which it attaches to $K$, contrary
to the $2$-connectivity of $B$. Consequently $P_i=T_{|C|}$.

In all other cases, adding the two terminals extends separate clique
blocks or attaches individual $K_2$ blocks, so $P_i$ is a block graph.
Any leaf block avoiding both terminals would have private vertices
separated from all of $B$ by its cut vertex, again a contradiction.
Neither terminal is a cut vertex of $P_i$, since deleting it leaves a
connected graph. Since $uv$ is not an edge of $P_i$, the terminals
belong to distinct blocks. Thus these terminal blocks are exactly the
two leaves of the block--cut tree, which is a path. This is an ordinary
clique-chain. When both attachments are the same singleton, the same
argument forces $C$ to be that singleton, giving the two-edge path.

Finally,
\[
 \sum_i|V(P_i)|=|V(B)|+2h-2,\qquad
 \sum_i|E(P_i)|=|E(B)|-e.
\]
Substitution into the definition of $q$ gives \eqref{eq:parallel-q}.
\end{proof}

\begin{lemma}\label{lem:exceptional-gap}
Every noncomplete $2$-connected graph with at least two exceptional
vertices satisfies $\defect(B)\ge1/3$.
\end{lemma}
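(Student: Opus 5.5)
The plan is to exhibit a correlation certificate $C$ for $B$ with $\Phi_B(C)\le q(B)-1/3$. By Lemma~\ref{lem:certificate} this gives $\kappa(B)\le q(B)-1/3$, which is the claim $\defect(B)\ge1/3$. Let $u,v$ be two distinct exceptional vertices. By Lemma~\ref{lem:two-exceptional}, $B-\{u,v\}$ has components $C_1,\ldots,C_h$, and each $P_i$ is an ordinary $u$--$v$ clique-chain or some $T_s$. Moreover $q(B)=\sum_iq(P_i)+h-1+2e$.

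Fix orthogonal unit vectors $x_u,x_v$. Apply Lemma~\ref{lem:terminal-certificates} to each $P_i$, using these common terminal vectors and placing the internal vectors of different $P_i$ in mutually orthogonal complements of $\operatorname{span}\{x_u,x_v\}$. The union of the certificates is then a Gram matrix of unit vectors. Every edge of $B$ other than $uv$ lies in exactly one $P_i$. If $e=1$, the edge $uv$ has correlation $0$ and contributes $2$. Hence
\[
 \Phi_B\le\sum_i\bigl(q(P_i)+\eta(P_i)\bigr)+2e
 =q(B)-(h-1)+\sum_i\eta(P_i).
\]
It therefore suffices to show $\sum_i\eta(P_i)\le h-4/3$ whenever this construction is used.

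For $h\ge2$, every $\eta\le 6-4\sqrt2<0.35$, so $\sum\eta< 0.35h\le h-4/3$ as soon as $h\ge3$. For $h=2$ we need $\eta(P_1)+\eta(P_2)\le2/3$. This holds unless both $P_i$ are two-edge paths, since $6-4\sqrt2+2/7<2/3$. If both are two-edge paths, $B$ is $C_4$ or $C_4$ plus a chord, i.e.\ $K_4-e$. For these I will build the certificate directly. For $C_4$, alternating antipodal vectors give cost $4=q-1$. For $K_4-e$, which is $T_2$ with $uv$ absent and the other two vertices as terminals, the same construction with the two degree-$3$ vertices as terminals applies. Alternatively I will handle it via the cyclic Lemma~\ref{lem:cyclic-certificate}.

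The main obstacle is $h=1$, where the orthogonal-terminal construction loses nothing to the $h-1$ term. Here $B-\{u,v\}$ is connected, and since $B$ is $2$-connected, $e=1$ is forced unless $B$ is just $P_1$. I expect that $B$ is then a cyclic clique-chain: $P_1$ is an ordinary chain and $uv$ closes it, or $P_1=T_s$ and $B=K_{s+2}$, which is excluded by noncompleteness. The cyclic case then follows from Lemma~\ref{lem:cyclic-certificate}, which gives $\Phi_B\le q(B)-1/3$ directly. If $uv\notin E(B)$ with $h=1$, then $u$ would be a cut vertex of $P_1=B$, which is impossible, so this case does not arise. I would verify these identifications carefully, in particular that closing an ordinary chain by $uv$ yields exactly the cyclic clique-chain structure with $\ell\ge3$, and check the degenerate small cases separately.
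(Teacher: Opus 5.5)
Your treatment of $h\ge2$, including the $C_4$ subcase, matches the paper's argument. The gap is in the case $h=1$, $uv\notin E(B)$, which you dismiss incorrectly. You claim it cannot occur because $u$ would be a cut vertex of $P_1=B$. But Lemma~\ref{lem:two-exceptional} allows $P_1=T_s$, and $B=T_s=K_{s+2}-uv$ is $2$-connected and noncomplete. Its vertices $u,v$ are exceptional, because $T_s-u\cong T_s-v\cong K_{s+1}$. Two-connectivity rules out only the ordinary-chain alternative, and for a different reason: such a chain has internal cut vertices, while its terminals are never cut vertices.

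For $s\ge3$ you cannot escape this case by choosing another pair. Deleting an inner vertex leaves $K_{s+1}-uv$, which is $2$-connected and noncomplete, so $u,v$ are the only exceptional vertices. Neither of your tools covers this graph. Lemma~\ref{lem:terminal-certificates} gives $\Phi_{T_s}=q(T_s)+\eta(T_s)$, an excess rather than a deficit. Lemma~\ref{lem:cyclic-certificate} does not apply either: $T_s$ with $s\ge3$ is $3$-connected, whereas every noncomplete cyclic clique-chain has a $2$-vertex cut.

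The missing idea is to use the nonedge $uv$. Give $u$ and $v$ the same unit vector, which is allowed because $C_{uv}<1$ is required only on edges. Then complete to a regular simplex of $s+1$ unit vectors. Every edge has correlation $-1/s$ and cost $2s/(s+1)$. Since $|E(T_s)|=(s^2+3s)/2$ and $q(T_s)=s^2+2s-1$, this gives $q(T_s)-\Phi_{T_s}=(s-1)/(s+1)\ge1/3$, which is the paper's treatment of $h=1$, $e=0$.

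The same identification trick settles $K_4-e$ directly, which is what the paper does. View $K_4-e$ as $T_2$ with its two degree-$2$ vertices as terminals; identifying them gives cost $5\cdot\frac{4}{3}=\frac{20}{3}=q(K_4-e)-\frac{1}{3}$. Your first suggestion there does not work: the degree-$3$ vertices are adjacent, and the orthogonal-terminal certificate for $T_2$ has positive excess $2/7$. Your fallback via Lemma~\ref{lem:cyclic-certificate} is valid, however. $K_4-e$ is the cyclic clique-chain with links $K_2,K_2,K_3$, and the case $k=1$ gives exactly $q-1/3$.
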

\begin{proof}
Use Lemma~\ref{lem:two-exceptional}. If $h\ge2$, give all branches the
certificates from Lemma~\ref{lem:terminal-certificates}, using the same two
orthogonal terminal vectors. This is possible by putting the remaining
vector spaces of the branches in mutually orthogonal complements of
the terminal span. If $uv$ is an edge, its cost is two.
By \eqref{eq:parallel-q} and Lemma~\ref{lem:certificate},
\begin{equation}\label{eq:parallel-deficit}
 \defect(B)\ge h-1-\sum_{i=1}^h\eta(P_i).
\end{equation}
For $h\ge3$, the right side is at least
\[
 h-1-h(6-4\sqrt2)\ge12\sqrt2-16>\frac13.
\]
For $h=2$, if at least one branch is not a two-edge path, then
\[
 \eta(P_1)+\eta(P_2)
 \le6-4\sqrt2+\frac27<\frac23,
\]
so \eqref{eq:parallel-deficit} applies.
If both branches are two-edge paths, $B$ is $C_4$ or $K_4$ minus an
edge. For $C_4$, an antipodal bipartite assignment has cost four, while
$q(C_4)=5$. For $K_4$ minus an edge, give its two nonadjacent vertices
the same vector and use three simplex vectors in total. Every edge
then has correlation $-1/2$, giving cost $20/3=q(B)-1/3$.
It remains to treat $h=1$. If $uv\notin E(B)$, the ordinary-chain
possibility is excluded by $2$-connectivity, so $B=T_s$.
Give $u,v$ the same vector and use a regular simplex on $s+1$ vectors
in total. Every edge has correlation $-1/s$, and $
 q(T_s)-\Phi_{T_s}=\frac{s-1}{s+1}\ge\frac13.$
If $uv\in E(B)$, the thick-link possibility would make $B$ complete.
The remaining possibility is an ordinary clique-chain closed by $uv$,
which is a noncomplete cyclic clique-chain. Apply
Lemma~\ref{lem:cyclic-certificate}.
\end{proof}

\begin{theorem}\label{thm:block-gap}
For every noncomplete $2$-connected graph $B$,
\begin{equation}\label{eq:block-gap}
 \kappa(B)\le q(B)-\frac13.
\end{equation}
\end{theorem}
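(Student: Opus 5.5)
We prove Theorem~\ref{thm:block-gap}, i.e.\ $\defect(B)\ge1/3$, by induction on $|V(B)|$. The idea is to reduce every noncomplete $2$-connected graph either to Lemma~\ref{lem:exceptional-gap} or to a smaller noncomplete $2$-connected graph, and to pass defect along through the block splitting of Lemma~\ref{lem:matrix-splitting}.

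\emph{Reduction by deleting a vertex.} Suppose $B$ has a vertex $w$ such that $B-w$ is not a block graph. Then some block $B'$ of $B-w$ is noncomplete and $2$-connected, so by induction $\defect(B')\ge1/3$. By Lemma~\ref{lem:matrix-splitting}, $\defect(B-w)\ge\sum_{B''}\defect(B'')\ge1/3$. It remains to show that adding back $w$ does not decrease the defect:
\[
\defect(B)\ge\defect(B-w).
\]
Let $w$ have degree $d\ge2$. Then $q(B)=q(B-w)+2d-1$, so we need
\[
\kappa(B)\le\kappa(B-w)+2d-1 .
\]
The natural tool is a certificate-type bound, but those only give upper bounds. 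So we work directly with $M\in\cD(B)$: write $M$ as the principal part on $V(B)\setminus\{w\}$ plus the row and column of $w$, and bound the extra contribution to $S_B(M)$ by $\sqrt{M_{ww}}\sum_u\sqrt{M_{uu}}$ via Cauchy--Schwarz, as in the Liu--Tang--Zhang ear argument.

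I expect this step to be the main obstacle. A clean monotonicity statement of the form ``$\kappa$ increases by at most $2d-1$ under adding a vertex of degree $d$'' is really an ear-decomposition inequality, and it may need the full LTZ machinery rather than a one-line estimate. If the direct approach fails, the fallback is to choose $w$ more cleverly. For instance, take an ear decomposition of $B$ whose last ear $P$ satisfies: $B$ minus the interior of $P$ is still noncomplete and $2$-connected. Then combine $\defect\ge1/3$ for the smaller graph with the LTZ ear bound $\defect(\text{ear})\ge0$.

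\emph{Every vertex is exceptional.} Now assume $B-w$ is a block graph for every $w$, so all vertices of $B$ are exceptional. A noncomplete $2$-connected graph has at least $4$ vertices, so $B$ has at least two distinct exceptional vertices. Lemma~\ref{lem:exceptional-gap} then gives $\defect(B)\ge1/3$ directly.

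\emph{Base case.} The small cases, e.g.\ $C_4$ and $K_4$ minus an edge, are already covered by Lemma~\ref{lem:exceptional-gap}, since in them every vertex is exceptional. Hence the induction only ever invokes the reduction step on graphs of larger order, and it is well founded.
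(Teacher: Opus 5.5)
\textbf{There is a genuine gap: the reduction step is false, not just unproved.} The step needs $\defect(B)\ge\defect(B-w)$, i.e.\ $\kappa(B)\le\kappa(B-w)+2d-1$, and this inequality fails.

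Take the house graph $B$: the $4$-cycle $1\,2\,3\,4$ together with a vertex $w$ adjacent to $1$ and $2$. Here $B-w=C_4$ is not a block graph and $d=2$. The antipodal certificate of Lemma~\ref{lem:certificate} gives $\kappa(C_4)\le4$, so your inequality would force $\kappa(B)\le7$. In the vertex order $(w,1,2,3,4)$ consider
\[
M=\begin{pmatrix}
27/16&16/9&16/9&0&0\\
16/9&41/16&25/16&0&1\\
16/9&25/16&41/16&1&0\\
0&0&1&2&1\\
0&1&0&1&2
\end{pmatrix}.
\]
This matrix is entrywise nonnegative and vanishes on the nonedges $w3,w4,13,24$. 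It is invariant under the swap $1\leftrightarrow2$, $3\leftrightarrow4$, so its quadratic form splits into two parts:
\begin{itemize}
\item on vectors $(0,a,-a,b,-b)$ it equals $2(a-b)^2$;
\item on vectors $(c,a,a,b,b)$ it is given by $\left(\begin{smallmatrix}27/16&32/9&0\\32/9&33/4&2\\0&2&6\end{smallmatrix}\right)$, whose leading principal minors are all positive.
\end{itemize}
Hence $M\in\cD(B)$. Since $S_B(M)=83/12$ and $T(M)=3895/144$, we get $\kappa(B)\ge 27556/3895>7$. Thus $\defect(B)<1\le\defect(B-w)$, so re-inserting a single vertex can genuinely lose defect.

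Read as $C_4$ plus the ear $1\,w\,2$, the same graph also defeats your fallback. Adding that ear raises $q$ by $3$ but raises $\kappa$ by more than $3$, so no ear-by-ear argument yielding $\defect(B)\ge\defect(B')$ can work. Your final case is fine: if every vertex is exceptional, there are two exceptional vertices and Lemma~\ref{lem:exceptional-gap} applies, exactly as in the paper's first case.

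\textbf{The missing idea is to stop comparing $B$ with a single $B-w$.} The paper splits on whether $B$ has at most one exceptional vertex. In that case induction and Lemma~\ref{lem:matrix-splitting} give $d_w=q(B-w)-\kappa(B-w)\ge1/3$ for at least $r-1$ vertices. For fixed $M\in\cD(B)$, the paper applies the definition of $\kappa(B-w)$ to $M-w$ for \emph{every} $w$ and sums with Cauchy--Schwarz, using
\[
\sum_w(S-\sigma_w)=(r-2)S,\qquad \sum_w q(B-w)=(r-2)(q-1),\qquad \sum_w T(M-w)=(r-2)T+\tr M .
\]
This yields $4S^2/T\le\bigl(q-1-\frac{r-1}{3(r-2)}\bigr)\bigl(1+\frac{x}{r-2}\bigr)$ with $x=\tr M/T$. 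The trace term $x/(r-2)$ is exactly what a one-vertex argument cannot control. The paper handles it with the elementary second envelope $4S^2/T\le 2m(1-x)$. It splits at $x_0=(r-\frac23)/(2m)$ and uses noncompleteness, $m\le\binom r2-1$, to get $4S^2/T\le q-\frac13$ in both regimes.
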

\begin{proof}
We induct on $r=|V(B)|$. There is no noncomplete $2$-connected graph
of order three. Suppose $r\ge4$ and the result holds for smaller
orders. If $B$ has at least two exceptional vertices, apply
Lemma~\ref{lem:exceptional-gap}. We may therefore assume that at most one
vertex is exceptional.

Write $m=|E(B)|$, $q=q(B)$, and
\[
 d_w=q(B-w)-\kappa(B-w),\qquad D=\sum_{w\in V(B)}d_w.
\]
Every $B-w$ is connected. If $w$ is not exceptional, $B-w$ has a
noncomplete block of smaller order. The induction hypothesis and
Lemma~\ref{lem:matrix-splitting} give $d_w\ge1/3$.
For every other $w$, Theorem~\ref{thm:LTZ} gives $d_w\ge0$. Hence
\begin{equation}\label{eq:D-lower}
 D\ge\frac{r-1}{3}.
\end{equation}

Fix $0\ne M\in\cD(B)$ and put
$S=S_B(M)$, $T=T(M)$, $x=\tr(M)/T\in[0,1]$.
Since $T=\tr(M)+2\sum_{uv\in E(B)}M_{uv}$, Cauchy--Schwarz over the
edges yields the first envelope
\begin{equation}\label{eq:flat-envelope}
 \frac{4S^2}{T}\le2m(1-x).
\end{equation}
For each $w$, let
$\sigma_w=\sum_{u\sim w}\sqrt{M_{uw}}$. Apply the definition of
$\kappa(B-w)$ to $M-w$ and sum the resulting inequalities
\[
 2(S-\sigma_w)\le\sqrt{\kappa(B-w)T(M-w)}.
\]
Using Cauchy--Schwarz and
\begin{align*}
 \sum_w(S-\sigma_w)&=(r-2)S,\\
 \sum_wq(B-w)&=(r-2)(q-1),\\
 \sum_wT(M-w)&=(r-2)T+\tr(M),
\end{align*}
we obtain the second envelope
\begin{equation}\label{eq:averaged-envelope}
 \frac{4S^2}{T}
 \le\left(q-1-\frac{D}{r-2}\right)
       \left(1+\frac{x}{r-2}\right).
\end{equation}
The first factor is nonnegative because it equals
$\sum_w\kappa(B-w)/(r-2)$.

Set
\[
 x_0=\frac{r-2/3}{2m},\qquad
 a=q-1-\frac{r-1}{3(r-2)}>0.
\]
If $x\ge x_0$, \eqref{eq:flat-envelope} gives
$4S^2/T\le q-1/3$.
If $x<x_0$, \eqref{eq:D-lower} and \eqref{eq:averaged-envelope} give
$4S^2/T\le a(1+x_0/(r-2))$.
The latter is strictly smaller than $q-1/3$. Indeed,
\begin{align*}
 &18m(r-2)^2
 \left(q-\frac13-a\left(1+\frac{x_0}{r-2}\right)\right)\\
 &\hspace{1em}=9r^3-21r^2+7r+2-18m(r-2)\\
 &\hspace{1em}\ge6r^2+7r-34>0,
\end{align*}
where we used $m\le r(r-1)/2-1$, since $B$ is noncomplete.
Taking the supremum over $M$ proves the induction step.
\end{proof}

For two matrices $A=(a_{ij})$ and $B=(b_{ij})$ of the same order,
their \emph{Schur product} (or \emph{Hadamard product}) is defined by
\[
A\circ B=(a_{ij}b_{ij}).
\]
We now apply the matrix deficit to the negative spectral part of the
adjacency matrix. This gives a lower bound on positive square energy.

\begin{lemma}\label{lem:spectral-budget}
Let $G$ be a connected graph of order at least two. There are numbers
$\varepsilon_B\ge0$, one for each block, such that
\begin{equation}\label{eq:spectral-budget}
 \rho(G)\ge\sum_{B\in\cB(G)}\varepsilon_B.
\end{equation}
Every noncomplete block satisfies $\varepsilon_B\ge1/3$.
\end{lemma}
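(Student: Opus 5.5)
We work with the negative spectral part. Write $A=A(G)$ and $M=A_-$. Then $M\succeq0$ and $M\circ A$ is entrywise nonnegative on edges, but $M$ itself need not be entrywise nonnegative. We therefore follow Liu--Tang--Zhang and pass to a doubly nonnegative matrix built from $A_-$. The natural choice is $N=A_-\circ A_-$, or more precisely the matrix with entries $(A_-)_{uv}^2$. By the Schur product theorem $N\succeq0$, and it is entrywise nonnegative. We then restrict it to the pattern of $G$ (diagonal plus edges) by taking $N'=N\circ(I+A)$.

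The restriction to the pattern needs care. Positive semidefiniteness may be lost when entries are zeroed, so instead we follow the LTZ argument that yields \eqref{eq:connected-bound}. In that argument one uses $\tr(AA_-)=-s^-(G)$, that is,
\[
\sum_{uv\in E}(A_-)_{uv}=-s^-/2 .
\]
The off-diagonal entries of $A_-$ on edges are what matter. LTZ produce a matrix $M^\ast\in\cD(G)$ with $S_G(M^\ast)\ge s^-(G)/2$ and $T(M^\ast)\le s^-(G)$; in effect $M^\ast_{uv}=(A_-)_{uv}^2$ on edges, while the diagonal is controlled by $\tr(A_-^2)=s^-$. Inserting this into \eqref{eq:LTZmatrix} gives $(s^-)^2\le q(G)\,s^-$, hence $s^-\le q(G)$. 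Since $s^+=2|E|-s^-$, this is exactly $\rho(G)\ge0$. We take this certificate as given in the form
\[
s^-(G)\le \kappa(G),
\]
which follows because $M^\ast$ is admissible in the supremum \eqref{eq:kappa}. This yields
\[
\rho(G)=q(G)-s^-(G)\ge q(G)-\kappa(G)=\defect(G).
\]

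Next we split over blocks. Lemma~\ref{lem:matrix-splitting} gives $\defect(G)\ge\sum_B\defect(B)$, so we set $\varepsilon_B=\defect(B)\ge0$, using Theorem~\ref{thm:LTZ} for nonnegativity. For a noncomplete block $B$, which has order at least three, Theorem~\ref{thm:block-gap} gives $\varepsilon_B=q(B)-\kappa(B)\ge1/3$. Bridge blocks and complete blocks receive $\varepsilon_B\ge0$, which suffices. Combining, $\rho(G)\ge\defect(G)\ge\sum_B\varepsilon_B$, which proves \eqref{eq:spectral-budget}.

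The main obstacle is the first step: justifying $s^-(G)\le\kappa(G)$ with a matrix genuinely in $\cD(G)$. This requires a doubly nonnegative matrix supported on the graph pattern whose edge square roots sum to $s^-/2$ and whose total mass is at most $s^-$. We would try $M^\ast=|A_-|\circ|A_-|$ restricted appropriately; more safely, we would quote the LTZ construction (their Section~1.2, the matrix built from the negative eigenvectors with $\sqrt{M_{uv}}=-(A_-)_{uv}$ on edges). Then we only need to check that their matrix vanishes, or can be made to vanish, off the pattern while keeping $T\le s^-$. If zeroing non-edge entries is problematic, we would instead rely on the weak-duality form, Lemma~\ref{lem:certificate}: the bound $\kappa(G)\le\Phi_G(C)$ lets us compare directly, since LTZ's inequality $s^-\le\Phi_G(C)$ holds for every admissible correlation matrix $C$. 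Taking the infimum over certificates built blockwise, exactly as in Theorem~\ref{thm:block-gap}, then gives the same blockwise deficits.
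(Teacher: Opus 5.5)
Your reduction $\rho(G)=q(G)-s^-(G)\ge q(G)-\kappa(G)=\defect(G)\ge\sum_B\defect(B)$, with $\varepsilon_B=\defect(B)$ and Theorem~\ref{thm:block-gap} for the noncomplete blocks, is sound. It is the paper's argument in coarser form: the inequality $\kappa(H)\le\sum_B\kappa(B)$ of Lemma~\ref{lem:matrix-splitting} is the same Cauchy--Schwarz step the paper carries out with $b_B$ and $T_B$.

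\textbf{The gap.} The step you yourself call the main obstacle, $s^-(G)\le\kappa(G)$, is left open. You never specify the matrix in $\cD(G)$. Your description of it ("$\sqrt{M_{uv}}=-(A_-)_{uv}$ on edges") is also inaccurate, because edge entries of $A_-$ need not be nonpositive. The missing idea is to fold the nonedge entries of $A_-\circ A_-$ onto the diagonal, rather than delete them (deleting can destroy positive semidefiniteness, as you note):
\[
 M=A_-\circ A_-+\sum_{uv\notin E(G)}(A_-)_{uv}^2(e_u-e_v)(e_u-e_v)^{\mathsf T},
\]
where the sum runs over unordered nonadjacent pairs $u\ne v$. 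This matrix has the properties you need:
\begin{itemize}
 \item Each summand is positive semidefinite; the first by the Schur product theorem.
 \item The rank-one terms cancel exactly the nonedge entries and only increase the diagonal, while edge entries stay equal to $(A_-)_{uv}^2$. Hence $M\in\cD(G)$.
 \item Since $\one^{\mathsf T}(e_u-e_v)=0$, you get $T(M)=\normF{A_-}^2=s^-(G)$ exactly.
 \item From $AA_-=-A_-^2$ you get $\sum_{uv\in E}(A_-)_{uv}=-s^-(G)/2$, so $S_G(M)=\sum_{uv\in E}|(A_-)_{uv}|\ge s^-(G)/2$.
\end{itemize}
Therefore $\kappa(G)\ge 4S_G(M)^2/T(M)\ge s^-(G)$.

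Your fallback through Lemma~\ref{lem:certificate} would not rescue the argument. Theorem~\ref{thm:block-gap} is not proved by exhibiting certificates; its induction step uses the averaged envelope. So no certificate with $\Phi_B\le q(B)-1/3$ is available for a general noncomplete block.

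\textbf{What your choice of $\varepsilon_B$ costs.} With the folding supplied, your argument proves the lemma as stated, but the coarser choice $\varepsilon_B=\defect(B)$ matters downstream. The paper instead sets $\varepsilon_B=q(B)-c_B$, where $c_B=b_B^2/T_B$ is computed from the actual split matrices $M_B$. This still gives $\varepsilon_B\ge\defect(B)$, but $\varepsilon_B$ retains the spectral data of $G$. Lemma~\ref{lem:leaf-triangle}, which speaks of ``the quantities in Lemma~\ref{lem:spectral-budget}'', uses exactly this data to show $\varepsilon_B>1/3$ for a leaf triangle. Your choice gives $\varepsilon_B=\defect(K_3)=0$ by \eqref{eq:complete-kappa}. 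Your version of the lemma therefore could not support the three-leaf-block case in the proof of Theorem~\ref{thm:main}.
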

\begin{proof}
Put $A=A(G)$. Apply the nonedge-folding construction of Liu, Tang,
and Zhang~\cite[proof of Theorem~2.1]{LTZ} to the doubly nonnegative
matrix $A_-\circ A_-$, which is positive semidefinite by the Schur
product theorem. This gives
\begin{equation}\label{eq:folding}
 M=A_-\circ A_-+
 \sum_{\substack{\{u,v\}\subseteq V(G)\\uv\notin E(G)}}
 (A_-)_{uv}^2(e_u-e_v)(e_u-e_v)^{\mathsf T}.
\end{equation}
The sum is over distinct unordered pairs. Then $M\in\cD(G)$,
$M_{uv}=(A_-)_{uv}^2$ on edges, and
\[
 T(M)=\one^{\mathsf T}(A_-\circ A_-)\one=s^-(G).
\]
Split $M$ into the matrices $M_B$ from
Lemma~\ref{lem:matrix-splitting}. Define
\begin{equation}\label{eq:block-bc}
 T_B=T(M_B),\qquad
 b_B=-2\sum_{uv\in E(B)}(A_-)_{uv},\qquad
 c_B=\begin{cases}b_B^2/T_B,&T_B>0,\\0,&T_B=0.\end{cases}
\end{equation}
If $T_B=0$, then $M_B=0$ and $b_B=0$.
Since $AA_-=-A_-^2$,
\[
 \sum_Bb_B=s^-(G),\qquad \sum_BT_B=s^-(G).
\]
Cauchy--Schwarz implies
$s^-(G)\le\sum_Bc_B$.
Moreover,
\[
 |b_B|\le2\sum_{uv\in E(B)}|(A_-)_{uv}|=2S_B(M_B),
\]
so $c_B\le\kappa(B)$.
Let $\varepsilon_B=q(B)-c_B$.
Using $q(G)=\sum_Bq(B)$, we conclude that
\[
 \rho(G)=q(G)-s^-(G)\ge\sum_B\varepsilon_B,
 \qquad
 \varepsilon_B\ge\defect(B)\ge0.
\]
For noncomplete blocks, apply Theorem~\ref{thm:block-gap}.
\end{proof}

\begin{lemma}\label{lem:leaf-triangle}
For the quantities in Lemma~\ref{lem:spectral-budget}, every leaf block
$B\cong K_3$ satisfies
\begin{equation}\label{eq:leaf-triangle-deficit}
 \varepsilon_B>\frac13.
\end{equation}
\end{lemma}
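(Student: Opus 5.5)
The plan is to reduce $\varepsilon_B>1/3$ to a finite-dimensional extremal problem on the three vertices of $B$, using local spectral identities at the two private vertices, and then to exclude the equality case using the moment inequalities from the proof of Lemma~\ref{lem:max-degree}.

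\textbf{Setup.} Let $B=\{x,y,z\}$, where $z$ is the cut vertex. Then $x$ and $y$ have degree two in $G$ and $N(x)=\{y,z\}$. By definition $\varepsilon_B=4-c_B$, and $c_B\le 4S_B(M_B)^2/T_B$. So it suffices to show $4S_B(M_B)^2<\tfrac{11}{3}T_B$, or that $T_B=0$, in which case $\varepsilon_B=4$.

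\textbf{Key steps.}
\begin{enumerate}[label=(\roman*)]
\item First I record the local information. By the last clause of Lemma~\ref{lem:matrix-splitting}, the diagonal entries of $M_B$ at $x$ and $y$ equal $M_{xx}$ and $M_{yy}$. Write $\alpha=|(A_-)_{xy}|$ and $\beta=|(A_-)_{xz}|$. The $(x,x)$ entry of $AA_-=-A_-^2$ gives $(A_-^2)_{xx}=-(A_-)_{xy}-(A_-)_{xz}$. By \eqref{eq:folding}, $M_{xx}=\sum_{u\not\sim x}(A_-)_{xu}^2=(A_-^2)_{xx}-\alpha^2-\beta^2$. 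Analogous relations hold at $y$.
\item Next, $b_B\le 2S_B(M_B)$, with equality forcing the signs of the edge entries of $A_-$ to be nonpositive. Since $\kappa(K_3)=4$ is attained only at multiples of $\one\one^{\mathsf T}$, the ratio $4S^2/T$ on $\cD(K_3)$ drops quantitatively below $4$ once $M_B$ is bounded away from that ray. In the extremal configuration, $M_{xx}=M_{xy}=M_{xz}$, which forces $\alpha=\beta$ and $(A_-^2)_{xx}-2\alpha^2=\alpha^2$. Writing $(A_-^2)_{xx}=2\alpha$ (near-extremal sign case), this gives $\alpha=2/3$. Hence, at vertex $x$, the negative second moment is $e=(A_-^2)_{xx}=4/3$ and the positive one is $c=2-e=2/3$.
\item Finally, apply the moment argument of Lemma~\ref{lem:max-degree} at $x$ with $d=2$. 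Using the third moment ($\ge0$) and Cauchy--Schwarz on the two signs gives $e^3\le d\,c^3$, i.e.\ $e\le 2^{1/3}c$. At $(c,e)=(2/3,4/3)$ this fails, since $64/27>16/27$. So the extremal configuration is impossible.
\end{enumerate}

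\textbf{Making it quantitative.} To get the strict constant $1/3$, I would bound $4S^2/T$ explicitly in terms of how far $M_B$ is from the ray $\one\one^{\mathsf T}$. The variables are $\alpha,\beta$, the analogous quantities at $y$, $m_{yz}$, and the $z$-diagonal entry $M^B_{zz}$, which is free and is optimized out through the PSD constraint. These are coupled with the constraint $e_x\le 2^{1/3}(2-e_x)$, i.e.\ $e_x\le 2\cdot2^{1/3}/(1+2^{1/3})\approx1.114$, and similarly at $y$. This caps $\alpha+\beta$ at roughly $1.114$ and pushes $M_{xx}$ away from $M_{xy}$, $M_{xz}$.

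\textbf{Main obstacle.} The hard step is the final scalar optimization: maximizing $4S^2/T$ over a $3\times3$ doubly nonnegative matrix subject to these side constraints, and checking that the maximum is below $11/3$. I would first optimize out $M^B_{zz}$ via the Schur complement (Lemma~\ref{lem:HJ}). I would then use symmetry in $x\leftrightarrow y$ to reduce to the case $\alpha_x=\alpha_y$, $\beta_x=\beta_y$. The remaining two- or three-variable inequality should be checkable by hand, with a crude margin at the worst point $e=1.114$.
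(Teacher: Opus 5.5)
\textbf{There is a genuine gap.} Your step (iii) fails, and the quantitative part of your plan rests on it.

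In Lemma~\ref{lem:max-degree}, the bound $e^3\le dc^3$ uses two facts. One is $c+e=d_H(v)$. The other is $\sum_{\lambda_j>0}w_j\lambda_j^3\le dc$, which comes from $|\lambda_j|\le d$ and holds there only because $d=\Delta(H)$. At a private vertex $x$ of degree two, the Perron vector of $G$ is positive at $x$, and $\lambda_1(G)$ can be arbitrarily large. The same argument therefore gives only $e^3\le\lambda_1(G)^2c^3/2$.

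The cap $e_x\le 2\cdot 2^{1/3}/(1+2^{1/3})\approx1.114$ is in fact false. Join $z$ to every vertex of $k$ disjoint copies of $K_{k+1}$. The spectral measure at $x$ is $\frac12\delta_{-1}$ plus half the spectral measure of a $3\times3$ quotient in the symmetric sector. As $k\to\infty$ this gives $e_x\to1+\sqrt5/10\approx1.22$.

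Without that cap, step (ii) gives nothing usable. The ``near-extremal sign case'' $(A_-^2)_{xx}=2\alpha$ is an assumption, not a derived fact. The observation that $4S^2/T$ drops below $4$ away from the ray of $\one\one^{\mathsf T}$ is only qualitative. It does not yield the constant $11/3$, and that constant is the whole content of the lemma.

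\textbf{The missing idea} is the automorphism exchanging $x$ and $y$. It makes $e_x-e_y$ an eigenvector of $A(G)$ for the eigenvalue $-1$. In a suitable orthonormal eigenbasis, every other eigenvector is constant on $\{x,y\}$. Hence, with your $\alpha,\beta$ now signed, $(A_-)_{xx}=1-\alpha$, $(A_-)_{xy}=-\alpha$ and $(A_-)_{xz}=(A_-)_{yz}=-\beta$. This exact symmetry also replaces your reduction to $\alpha_x=\alpha_y$, $\beta_x=\beta_y$.

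These identities give the constraints you need:
\begin{itemize}
\item Positive semidefiniteness of $A_-$ on $\{x,y\}$ gives $\alpha\le1/2$. This alone already excludes your extremal point $\alpha=2/3$.
\item The $(x,x)$ entry of $A_-^2=-AA_-$, compared with $(A_-^2)_{xx}\ge(1-\alpha)^2+\alpha^2+\beta^2$, forces $0<\alpha\le1/2$ and $0\le\beta\le1$. So the edge signs are settled exactly, and $b_B=2(\alpha+2\beta)=2S_B(M_B)$.
\item The diagonal entry $(A_-)_{xx}$, which your setup never uses, is what enters $M_{xx}=\alpha+\beta-\alpha^2-\beta^2$.
\end{itemize}

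From there your planned Schur-complement step works. Testing $M_B$ on the root and $(e_x+e_y)/\sqrt2$ gives $T_B\ge 2(\alpha+\beta)^2/(\alpha+\beta-\beta^2)$. Hence $c_B\le 2F(\alpha,\beta)$, where $F$ is increasing in $\alpha$. An explicit quartic in $\beta$ then shows $F(1/2,\beta)<11/6$, so $\varepsilon_B>1/3$.
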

\begin{proof}
Let $u$ be the cut vertex of $B$, and let $a,b$ be its private vertices.
Their interchange is an automorphism, and $e_a-e_b$ is an eigenvector
of $A(G)$ with eigenvalue $-1$.
Consequently, for some real $x,y$,
\[
 (A_-)_{aa}=(A_-)_{bb}=1-x,\qquad
 (A_-)_{ab}=-x,\qquad
 (A_-)_{ua}=(A_-)_{ub}=-y.
\]
The principal matrix on $\{a,b\}$ is positive semidefinite, so
$x\le1/2$. Since $a$ has exactly the two neighbours $u,b$,
$A_-^2=-AA_-$ gives
\[
 (1-x)^2+x^2+y^2\le x+y,
 \qquad (1-x)(1-2x)\le y(1-y).
\]
It follows that
\begin{equation}\label{eq:triangle-xy}
 0<x\le\frac12,\qquad 0\le y\le1.
\end{equation}
Indeed, $x\le0$ would make the left side of the second inequality at
least one, whereas $y(1-y)\le1/4$; its nonnegativity then forces
$y\in[0,1]$.
In the folded matrix \eqref{eq:folding}, the private diagonal entries
are $
 d=x+y-x^2-y^2, $
because $(A_-^2)_{aa}=x+y$ and the only off-diagonal edge entries in
that row have squares $x^2$ and $y^2$.
They are unchanged by splitting. The $a,b$ entry of $M_B$ is $x^2$,
and its two root-to-private entries are $y^2$.
Put $h=d+x^2=x+y-y^2>0$.
Positive semidefiniteness, restricted to the root and the vector
$(e_a+e_b)/\sqrt2$, forces the root diagonal of $M_B$ to be at least
$2y^4/h$. Therefore
\[
 T_B\ge2h+4y^2+\frac{2y^4}{h}
 =\frac{2(x+y)^2}{h}.
\]
Since $b_B=2(x+2y)$,
\begin{equation}\label{eq:triangle-F}
 c_B\le2F(x,y),\qquad
 F(x,y)=\frac{(x+2y)^2(x+y-y^2)}{(x+y)^2}.
\end{equation}
For $x>0$, $y\ge0$,
\[
 \frac{\partial F}{\partial x}
 =\frac{(x+2y)(x^2+xy+2y^3)}{(x+y)^3}\ge0.
\]
Thus $F(x,y)\le F(1/2,y)$, while
\begin{align*}
 \frac{11}{6}-F\!\left(\frac12,y\right)
 &=\frac{48y^4-24y^3-23y^2+7y+4}{3(2y+1)^2}>0,\\
 48y^4-24y^3-23y^2+7y+4
 &=\left(y-\frac23\right)^2(48y^2+40y+9)+\frac{11}{9}y.
\end{align*}
The final expression is positive for all $y\ge0$.
Hence $c_B<11/3$, and $\varepsilon_B=q(K_3)-c_B=4-c_B>1/3$.
\end{proof}

\section{Proof of the main theorem}\label{sec:main-proof}
\begin{proof}[Proof of Theorem~\ref{thm:main}]
Let $G$ be connected, with $\delta(G)\ge2$, and suppose that $G$ is not
a cycle. If $G$ is $2$-connected, the assertion follows from
Akbari, Hu, and Liu~\cite{AHL}.
We may therefore assume that $G$ has a cut vertex.
Every leaf block then has order at least three, since a leaf block
$K_2$ would contain a vertex of degree one.

Suppose first that some leaf block is a complete graph $K_t$ with
$t\ge4$. Choose three private vertices of that block and call their
set $X$. They induce a triangle, and $G-X$ is connected.
By Lemma~\ref{lem:variational} and Theorem~\ref{thm:LTZ},
\[
 s^+(G)\ge s^+(G[X])+s^+(G-X)
 \ge4+(n-3-1)=n.
\]
Henceforth every complete leaf block can be assumed to be a triangle.

If there are at least three leaf blocks, each has deficit at least
$1/3$: use Theorem~\ref{thm:block-gap} and Lemma~\ref{lem:spectral-budget} for noncomplete
blocks and Lemma~\ref{lem:leaf-triangle} for triangles.
Thus \eqref{eq:spectral-budget} gives $\rho(G)\ge1$.

There remain exactly two leaf blocks. The block--cut tree is therefore
a path. If an end block $B$ is a triangle, deleting all three vertices
of $B$ leaves a connected graph: its cut vertex belongs to exactly one
other block, and deletion of that vertex leaves that other block
connected (or leaves the single remaining vertex of a bridge).
All subsequent blocks remain attached in a chain. Consequently,
\[
 s^+(G)\ge s^+(B)+s^+(G-V(B))
 \ge4+(n-3-1)=n.
\]
We may thus assume that both end blocks are noncomplete.

If there are at least three noncomplete blocks, then
Lemma~\ref{lem:spectral-budget} and Theorem~\ref{thm:block-gap} again give $\rho(G)\ge1$.
Otherwise the two end blocks are the only noncomplete blocks, and
every internal block is complete, with bridges allowed.

Choose any cut vertex $v$. Since the block--cut tree is a path,
$G-v$ has exactly two components. Adjoin $v$ to each component and
let the resulting induced connected graphs be $H_1,H_2$.
Each $H_i$ consists of one end block followed by a chain of complete
blocks and bridges, and $H_i-v$ is connected.
By Lemma~\ref{lem:block-chain}, $ \rho(H_1)\ge\frac34$ and $\rho(H_2)\ge\frac34.
$
Applying \eqref{eq:coalescence-linear} at $v$ yields
\[
 \rho(G)\ge\frac89\left(\frac34+\frac34\right)-\frac29
 =\frac{10}{9}>1.
\]
This completes the proof in every case.
\end{proof}

\begin{samepage}
Combining the theorem with Lemma~\ref{lem:cycles} identifies the actual
exceptions within the minimum-degree-two class.
\begin{corollary}\label{cor:exceptions}
If $G$ is a connected graph with $\delta(G)\ge2$, then
\(
 s^+(G)<|V(G)|\) if and only if
$ G\cong C_{4k+1}$, for some integer $k\ge1.$
\end{corollary}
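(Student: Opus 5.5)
The corollary has two directions, and both follow by combining Theorem~\ref{thm:main} with the cycle formulas of Lemma~\ref{lem:cycles}. The plan is to split on whether $G$ is a cycle.

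If $G$ is not a cycle, then $G$ is connected with $\delta(G)\ge2$, so Theorem~\ref{thm:main} gives $s^+(G)\ge|V(G)|$. Such a $G$ is therefore never an exception. Since a cycle is not isomorphic to a non-cycle, the equivalence holds trivially in this case.

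Now suppose $G\cong C_k$ with $k\ge3$. We read off $s^+(C_k)-k$ from \eqref{eq:cycles}:
\begin{itemize}
\item For even $k$ the difference is $0$, so there is no strict inequality.
\item For $k\equiv3\pmod4$ the difference is $\sec(\pi/k)-1$. This is at least $0$ because $0<\pi/k\le\pi/3<\pi/2$ gives $\sec(\pi/k)\ge1$. Again there is no strict inequality.
\item For $k\equiv1\pmod4$, where $k\ge5$ and so $k=4j+1$ with $j\ge1$, the difference is $1-\sec(\pi/k)$. It is strictly negative because $0<\pi/k<\pi/2$ forces $\cos(\pi/k)\in(0,1)$, hence $\sec(\pi/k)>1$.
\end{itemize}
Thus among cycles, $s^+(C_k)<k$ holds exactly when $k\equiv1\pmod 4$ and $k\ge5$, that is, $k=4j+1$ with $j\ge1$.

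Combining the two cases gives both directions of the equivalence. The only point needing care is the strictness of $\sec(\pi/k)>1$: it uses $\pi/k\neq0$, so that $\cos(\pi/k)<1$, together with $\pi/k<\pi/2$, so that $\cos(\pi/k)>0$. Beyond this there is no real obstacle.
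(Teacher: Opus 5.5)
Your proposal is correct and takes the same route as the paper: non-cycles are handled by Theorem~\ref{thm:main}, and for cycles the sign of $s^+(C_k)-k$ is read off from \eqref{eq:cycles}. Strictness comes from $\sec(\pi/k)>1$, and it occurs exactly when $k\equiv1\pmod4$ with $k\ge5$; you simply spell out this strictness check more explicitly than the paper's one-line proof.
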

\begin{proof}
Noncycles are covered by Theorem~\ref{thm:main}. For cycles, the assertion
follows directly from \eqref{eq:cycles}.
\end{proof}
\end{samepage}

Every $2$-edge-connected graph of order at least three has minimum degree
at least two. Thus Theorem~\ref{thm:main} also gives the following consequence.

\begin{corollary}
Let $G$ be a $2$-edge-connected graph of order $n\ge3$.
If $G$ is not a cycle, then $s^+(G)\ge n$.
\end{corollary}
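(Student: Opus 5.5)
The plan is to reduce to the main theorem and prove one basic fact. Let $G$ be $2$-edge-connected of order $n\ge3$ and not a cycle. Then $G$ is connected by definition. If we show $\delta(G)\ge2$, Theorem~\ref{thm:main} applies directly and gives $s^+(G)\ge n$.

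First, $\delta(G)\ge1$. Since $G$ is connected with $n\ge3\ge2$ vertices, it has no isolated vertex.

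Next, suppose some vertex $v$ has degree exactly one, with unique neighbour $u$. Deleting the edge $uv$ isolates $v$. The remaining vertices form a nonempty set because $n\ge2$, so $G-uv$ is disconnected. Hence $uv$ is a bridge, contradicting $2$-edge-connectivity. Therefore $\delta(G)\ge2$.

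With connectivity, $\delta(G)\ge2$, and the hypothesis that $G$ is not a cycle, Theorem~\ref{thm:main} yields the bound.

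There is no real obstacle here; the only point needing care is the definition. I use $2$-edge-connected to mean connected with no bridge. The order assumption $n\ge3$ excludes $K_1$ and $K_2$, so the degree argument is valid. The lower bound $n\ge3$ is not otherwise needed beyond guaranteeing at least two vertices.
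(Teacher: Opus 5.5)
Your proof is correct and matches the paper's route: the paper likewise notes that a $2$-edge-connected graph of order at least three has minimum degree at least two, and then invokes Theorem~\ref{thm:main}. Your argument that a degree-one vertex would make its incident edge a bridge is the justification the paper leaves implicit.
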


\section*{Acknowledgments}

Supported by National Natural Science Foundation of China (12671396, 12331012).

\section{Declaration of AI Usage}

The authors used an artificial intelligence tool during the preparation of this paper. The tool was employed to improve the English language and to assist with certain computational tasks. All results, derivations, and conclusions were independently verified by the authors. The authors accept full responsibility for the correctness of the  manuscript.

\end{document}